\newtheorem{theorem}{Theorem}[section]
\newaliascnt{lem}{theorem}
\newtheorem{lemma}[lem]{Lemma}
\newaliascnt{ass}{theorem}
\newaliascnt{prop}{theorem}
\newtheorem{prop}[prop]{Proposition}
\newaliascnt{cor}{theorem}
\newaliascnt{defi}{theorem}
\newtheorem{defi}[defi]{Definition}
\theoremstyle{definition}
\newaliascnt{ex}{theorem}
\newaliascnt{rem}{theorem}
\newtheorem{remark}[rem]{Remark}
\renewcommand{\d}{\,\mathrm{d}}											
\renewcommand*{\epsilon}{\varepsilon}                                   
\renewcommand*{\rho}{\varrho}                                   		
\newcommand*{\sep}{\; \vrule \;}                                        
\newcommand*{\N}{\mathbb{N}}                                            
\newcommand*{\R}{\mathbb{R}}                                            
\renewcommand*{\S}{\mathcal{S}}                                         
\newcommand*{\F}{\mathcal{F}}                                         	
\newcommand*{\D}{\mathcal{D}}                                         	
\newcommand*{\Co}{\mathcal{C}}                                         	
\newcommand*{\abs}[1]{\left| #1 \right|}                                
\newcommand*{\norm}[1]{\left\| #1 \right\|}                             
\renewcommand{\tilde}[1]{ \widetilde{#1} }        						
\setlist{itemsep=2pt, topsep=2pt}
\newcommand{\cont}{\ensuremath{\mathcal{C}}}
\newcommand{\nrm}[1]{\ensuremath{\lVert #1 \rVert}}
\newcommand{\grklam}[1]{\big(#1\big)}
\newcommand{\ssgrklam}[1]{\bigg(#1\bigg)}
\newcommand{\ggklam}[1]{\big\{#1\big\}}
\newcommand{\sggklam}[1]{\Big\{#1\Big\}}
\newcommand{\geklam}[1]{\big [#1 \big ]}
\newcommand{\dx}{\mathrm d x}
\DeclareFontFamily{U}{matha}{\hyphenchar\font45}
\DeclareFontShape{U}{matha}{m}{n}{
      <5> <6> <7> <8> <9> <10> gen * matha
      <10.95> matha10 <12> <14.4> <17.28> <20.74> <24.88> matha12
      }{}
\DeclareSymbolFont{matha}{U}{matha}{m}{n}
\DeclareFontFamily{U}{mathx}{\hyphenchar\font45}
\DeclareFontShape{U}{mathx}{m}{n}{
      <5> <6> <7> <8> <9> <10>
      <10.95> <12> <14.4> <17.28> <20.74> <24.88>
      mathx10
      }{}
\DeclareSymbolFont{mathx}{U}{mathx}{m}{n}
\DeclareMathDelimiter{\vvvert}{0}{matha}{"7E}{mathx}{"17}
\title{
On the limit regularity in Sobolev and Besov scales related to approximation theory
}
\date{}
\author{
Petru A. Cioica-Licht\footnote{Universit\"at Duisburg-Essen,  Fakult\"at Mathematik, AG Stochastische Analysis, 45117 Essen \textsc{and} University of Otago, Department of Mathmatics and Statistics, P.O. Box 56, Dunedin 9054, New Zealand. Email: petru.cioica-licht@uni-due.de}
\qquad
Markus Weimar\footnote{\emph{Corresponding author}. Ruhr University Bochum, Faculty of Mathematics, Research Group Numerics, Universit\"atsstra{\ss}e 150,
44801 Bochum, Germany. Email: markus.weimar@rub.de. }
}
\begin{document}   
\maketitle   

\vspace*{-2em}

\begin{center}
\emph{Dedicated to Prof.~Dr.~Stephan Dahlke on the occasion of his 60th birthday}
\end{center}

\smallskip

\begin{abstract}
\noindent 
We study the interrelation between the limit $L_p(\Omega)$-Sobolev regularity $\overline{s}_p$ of (classes of) functions on bounded Lipschitz domains $\Omega\subseteq\R^d$, $d\geq 2$, and the limit regularity $\overline{\alpha}_p$ within the corresponding adaptivity scale of Besov spaces $B^\alpha_{\tau,\tau}(\Omega)$, where $1/\tau=\alpha/d+1/p$ and $\alpha>0$ ($p>1$ fixed). 
The former determines the convergence rate of uniform numerical methods, whereas the latter corresponds to the convergence rate of best $N$-term approximation. 
We show how additional information on the Besov or Triebel-Lizorkin regularity may be used to deduce upper bounds for $\overline{\alpha}_p$ in terms of $\overline{s}_p$ simply by means of classical embeddings and the extension of complex interpolation to suitable classes of quasi-Banach spaces due to Kalton, Mayboroda, and Mitrea (Contemp.\ Math.~445).
The results are applied to the Poisson equation, to the $p$-Poisson problem, and to the inhomogeneous stationary Stokes problem.
In particular, we show that already established results on the Besov regularity for the Poisson equation are sharp.

\smallskip
\noindent \textbf{Keywords:} Non-linear approximation, adaptive methods,  Besov space, Triebel-Lizorkin space, regularity of solutions, stationary Stokes equation,  Poisson equation,  $p$-Poisson equation, Lipschitz domain.

\smallskip
\noindent \textbf{2010 Mathematics Subject Classification:} 
35B35, 
35J92, 
41A25, 
46E35, 
65M99. 
\end{abstract}

\section{Introduction}\label{sec:Intro}
The convergence rate of approximation methods strongly depends on the regularity of the target function.
In particular,  the convergence rate of the best $N$-term approximation for a function $f\colon\Omega\to\R$ on a bounded Lipschitz domain $\Omega\subseteq\R^d$, $d\in\N$, is intimately related to its regularity in the scale of Besov spaces
\begin{equation}\label{eq:scale}
	B^\alpha_{\tau,\tau}(\Omega),\qquad\frac{1}{\tau}=\frac{\alpha}{d}+\frac{1}{p},\quad \alpha>0,\tag{$*$}
\end{equation} 
whereas the convergence of an approximation method based on uniform refinements depends on the regularity in the scale $W^s_p(\Omega)$, $s>0$, of Sobolev spaces; here, $1<p<\infty$ is fixed and the approximation error is measured in $L_p(\Omega)$. 
Roughly speaking, if (and only if) the Besov regularity of the target function in the scale~\eqref{eq:scale} is strictly higher than its corresponding Sobolev regularity, a higher convergence rate may be achieved by switching from uniform refinement strategies to more sophisticated adaptive wavelet or finite element schemes.
We refer to~\cite{DahDahDeV1997, DeV1998, GasMor2014} and to the references therein for details and sufficient assumptions for such statements. 
Definitions of the relevant function spaces are provided in the appendix.

The Sobolev regularity of solutions to elliptic partial differential equations on non-smooth domains may be very limited, even if the forcing terms are infinitely smooth.
Upper bounds for 
\begin{equation}\label{eq:def:sbar}
	\overline{s}_p
	:=\overline{s}_p(S(\Omega))
	:=\sup\ggklam{s>0 \sep S(\Omega)\subseteq W^s_p(\Omega)},
\end{equation}
where $S(\Omega)\subseteq L_p(\Omega)$ is a suitably chosen set of solutions to various instances of  elliptic equations, can be found, for instance, in~\cite{Cos2019, FabMenMit1998, Gri1985, JK95, MazRos2010, Sav1998}.
To mention an example, there exist bounded $\cont^1$ domains $\Omega\subseteq\R^d$ such that if we define $S(\Omega)$ to be the set of all solutions to the Poisson equation with zero Dirichlet boundary conditions and right hand sides $f\in\Co^\infty(\overline{\Omega})$, then $\overline{s}_p(S(\Omega))=1+1/p$, see~\autoref{sect:Poisson} for details.
Similar results for (stochastic) evolution equations can be found, e.g., in~\cite{Gri1992, Lin2014}.
At the same time, we know that the solution to most of the equations in the aforementioned references may have higher regularity $\alpha>\overline{s}_p$ in the scale~\eqref{eq:scale}, see, e.g.,~\cite{CDK+,Dah1999,DahDeV1997,DahDieHar+2014,DaSi08,DaSi13,DahWei2015,Eck2015,EckCioDah2016,Han2015}.
For instance, in the example above, it is known that 
\begin{equation*}
	S(\Omega)\subseteq B^\alpha_{\tau,\tau}(\Omega),\qquad \frac{1}{\tau}=\frac{\alpha}{d}+\frac{1}{p},\quad\text{for all}\quad 0<\alpha<\bigg(1+\frac{1}{p}\bigg)\frac{d}{d-1},
\end{equation*}
see~\cite{DahDeV1997}.
The higher Besov regularity justifies the development of adaptive numerical methods for (stochastic) partial differential equations.
However, to the best of our knowledge, \emph{no upper bound at all} for the regularity in the scale~\eqref{eq:scale}, i.e., for
\begin{equation}\label{eq:def:alphabar}
	\overline{\alpha}_p
	:=\overline{\alpha}_p(S(\Omega))
	:=\sup\sggklam{\alpha>0 \sep S(\Omega)\subseteq B^\alpha_{\tau,\tau}(\Omega), \;\; \frac{1}{\tau}=\frac{\alpha}{d}+\frac{1}{p}}
\end{equation}
can be found in the literature; here, $\sup\emptyset:=-\infty$.
Thus, in many settings, we do know that there is the possibility to outperform uniform methods by adaptive refinement strategies but we do not know how high the convergence rate of these methods can maximally get.
 Note that the cases $\overline s_p=\infty$, resp.\ $\overline \alpha_p=\infty$, are explicitly allowed and indeed occur already in the most basic examples; see, e.g., Remark~\ref{rem:grisvard}. 

In this paper we study the interrelation between the limit regularity indices $\overline{s}_p$ and $\overline{\alpha}_p$.
In \autoref{sect:mainresult} we prove an abstract result showing for arbitrary sets $S(\Omega)\subseteq L_p(\Omega)$ how additional information about the Besov or Triebel-Lizorkin regularity of all $u\in S(\Omega)$ can be used to deduce \emph{upper bounds} for $\overline{\alpha}_p$ in terms of $\overline{s}_p$ simply by means of the extension of complex interpolation to suitable classes of quasi-Banach spaces from~\cite{KMM07} and classical embeddings.
We apply this result in \autoref{sect:example} to the Poisson equation, the $p$-Poisson problem, and the inhomogeneous stationary Stokes equation. In particular,  we show that under fairly natural assumptions, already established positive results on the Besov regularity of the solution to the Poisson equation in the scale~\eqref{eq:scale}  are actually sharp.
Before we start, we introduce some notation and comment on so-called DeVore-Triebel diagrams, which we will use in order to visualize results.

\medskip

\noindent\textbf{Notation.}  
Throughout this manuscript, $\Omega$ denotes a bounded Lipschitz domain in $\R^d$ for some $d\in\N$.
For $0<p< \infty$, by $L_p(\Omega)$ we denote the space of all (equivalence classes of) Lebesgue-measurable, scalar-valued functions satisfying $\nrm{u\sep L_p(\Omega)}^p:=\int_{\Omega} \abs{u(x)}^p\,\dx<\infty$, while $L_\infty(\Omega)$ is the space of all (equivalence classes of) Lebesgue-measurable, Lebesgue-almost everywhere bounded scalar-valued functions on $\Omega$.
Moreover,  $B^s_{p,q}(\Omega)$ and $F^s_{p,q}(\Omega)$ stand for the Besov and Triebel-Lizorkin spaces, respectively, with smoothness parameter $s\in\R$, integrability parameter $p\in(0,\infty]$ (with $p<\infty$ for Triebel-Lizorkin spaces) and microscopic parameter $q\in(0,\infty]$.
The corresponding spaces $B^s_{p,q}(\partial\Omega)$ and $F^s_{p,q}(\partial\Omega)$ on the boundary~$\partial\Omega$ of the domain $\Omega$ are defined as in~\cite{MitWri2012}. 
For $1<p<\infty$, by~$W^s_p(\Omega)$ we denote the $L_p(\Omega)$-Sobolev space of order $s\in\R$. 
For two quasi-normed spaces $X$ and $Y$, we write $X\hookrightarrow Y$ if $X$ is continuously and linearly embedded in $Y$ and $[X,Y]_{\theta}$ stands for the complex interpolation space of the pair $(X,Y)$ with parameter $\theta\in(0,1)$.
Precise definitions and relevant interpolation and embedding properties of Besov, Triebel-Lizorkin, and Sobolev spaces are collected in \autoref{sect:appendix}.

Throughout, the letter $C$ is used to denote a finite positive constant that may differ from one appearance to another, even in the same chain of inequalities. 
Moreover, we adopt the usual conventions $1/\infty:=0$ and $1/0:=\infty$. 

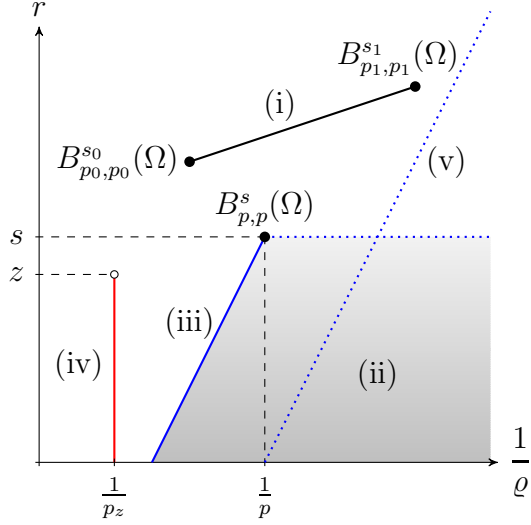
\begin{figure}
\begin{center}
\begin{tikzpicture}
\shade [left color=gray!50, right color=gray!10, shading angle = 180] (1.5,0) -- (3,3) -- (6,3) -- (6,0);
\draw (4.5,0.8) node[above] {(ii)};

\draw [->,>=stealth'] (0,-0.05) -- (0,5.8) node[above] {$r$};
\draw [->,>=stealth'] (-0.05,0) -- (6.1,0) node[right] {$\displaystyle\frac{1}{\rho}$};

\draw [red, thick] (1,0)  -- (1,2.5);
\draw (1,0.05) -- (1,-0.05) node[below] {$\frac{1}{p_z}$};
\draw (-0.3,2.5) node {$z$};
\draw [dashed] (-0.05,2.5) -- (1,2.5); 
\draw (1,2.5) node[circle,draw=black, fill=white, inner sep=1pt]  {};
\draw (1,1.3) node[left] {(iv)};

\draw [blue,thick] (1.5,0) -- (3,3);
\draw (2,1.5) node[above]{(iii)};
\draw [blue, dotted, thick] (3,3) -- (6,3);

\draw (3,-0.05) node[below] {$\frac{1}{p}$};
\draw [dashed] (-0.05,3) -- (3,3);
\draw (-0.3,3) node {$s$};
\fill (3,3) circle (2pt) node[anchor=south] {$B^s_{p,p}(\Omega)$};
\draw [dashed] (3,-0.05) -- (3,3);

\draw [dotted,blue,thick] (3,0) -- (6,6);
\draw (5,4) node[anchor=west] {(v)};
\fill (5,5) circle (2pt) node[anchor=south] {$B^{s_1}_{p_1,p_1}(\Omega)$\phantom{nn}};
\fill (2,4) circle (2pt) node[anchor=east] {$B^{s_0}_{p_0,p_0}(\Omega)$};
\draw [thick] (2,4) -- (5,5);
\draw (3.2,4.4) node[above] {(i)};
\end{tikzpicture}
\caption[DeVoreTriebel00]{\begin{tabular}[t]{l}Visualization of Besov spaces on bounded Lipschitz \\ domains $\Omega\subseteq\R^d$ in a DeVore-Triebel diagram.\end{tabular}}\label{fig:DeVoreTriebel00}
\end{center}
\end{figure}

\medskip

\noindent\textbf{DeVore-Triebel diagrams.} We are going to use so-called DeVore-Triebel diagrams in order to visualize results. 
In those $(1/p,s)$-diagrams, we identify every point $(1/p,s)\in [0,\infty)\times \R$ with the Besov space $B^s_{p,p}(\Omega)$.
Many embedding and interpolation results for Besov spaces can then be visualized in a very convenient way (see~\autoref{fig:DeVoreTriebel00}):
\newpage
\begin{itemize}
\item Besov spaces form scales of (generalized) complex interpolation spaces, see \autoref{prop:interpol}. As a consequence, if $f\in B^{s_i}_{p_i,p_i}(\Omega)$ for $i=0,1$,  then $f\in B^{\tilde{s}}_{\tilde{p},\tilde{p}}(\Omega)$ for all $(1/\tilde{p},\tilde{s})$ on the line segment between $(1/p_0,s_0)$ and $(1/p_1,s_1)$; see (i) in~\autoref{fig:DeVoreTriebel00}.
\item If $f\in B^{s}_{p,p}(\Omega)$ for some $0<p<\infty$ and $s\in\R$, then, by \autoref{prop:embeddings}\ref{it:embedd:Sob:eps}, $f$ is contained in all the Besov spaces represented by the points $(1/\tilde{p},\tilde{s})\in [0,\infty)\times\R$ with $\tilde{s}< s-d\,\max\ggklam{ 1/p-1/\tilde{p},\,0}$; see the shaded area~(ii) in~\autoref{fig:DeVoreTriebel00}. Moreover, by \autoref{prop:embeddings}\ref{it:embedd:sob:sharp}, it is contained in all Besov spaces represented by the points $(1/\tilde{p},\tilde{s})\in (0,1/p)\times\R$ with $\tilde{s}=s-d\,\grklam{1/p-1/\tilde{p}}$; see~(iii) in~\autoref{fig:DeVoreTriebel00}.
\item If $f\in A^{z}_{p_z,q_z}(\Omega)$ for some $A\in \{B,F\}$, $z\in \R$ and $0<p_z,q_z\leq \infty$ (with finite $p_z$ if $A=F$), then, by~\autoref{prop:embeddings}\ref{it:embedd:Sob:eps}, $f$ is contained in all Besov spaces represented by the ray $\{(1/p_z,\tilde{s}) \sep \tilde{s}<z\}$; see (iv) in~\autoref{fig:DeVoreTriebel00}.
\end{itemize}
Moreover, in such a diagram, for $1<p<\infty$, the scale~\eqref{eq:scale} is represented by the so-called \emph{$L_p(\Omega)$-Sobolev embedding line} 
\begin{equation}\label{eq:Lp-sob-embedding}
	\left\{ \left( \frac{1}{\tau}, \alpha \right) \in (0,\infty)^2 \sep \frac{1}{\tau}=\frac{\alpha}{d} + \frac{1}{p}\right\},
\end{equation}
see (v) in~\autoref{fig:DeVoreTriebel00}.

\section{Main result}\label{sect:mainresult}
In this section we analyze how additional information about the Besov or Triebel-Lizorkin regularity may be used in order to derive upper bounds for $\overline{\alpha}_p$ in terms of $\overline{s}_p$ simply by means of complex interpolation and classical embedding theorems; here and in the sequel, $\overline{s}_p$ and $\overline{\alpha}_p$  are defined as  in~\autoref{sec:Intro}, see~\eqref{eq:def:sbar} and~\eqref{eq:def:alphabar}, respectively.
We prove the following main result.

\begin{theorem}\label{thm:main:new}
	For $d\in\N$ let $\Omega\subseteq\R^d$ be a bounded Lipschitz domain. 
	Moreover, let $1<p<\infty$ and let $S(\Omega)\subseteq L_p(\Omega)$ be such that $0<\overline{s}_p=\overline{s}_p(S(\Omega))\leq \infty$.
	Assume that for some $z\in\R$ and some $p<p_z\leq\infty$,  $S(\Omega)\subseteq B^s_{p_z,p_z}(\Omega)$ for all $s<z$. 
	Then
	\begin{equation}\label{eq:z:s:alpha}
		z\leq \displaystyle \overline{s}_p\leq \overline{\alpha}_p.
	\end{equation}
	If additionally 
	\begin{equation*}
		z > \mu  
		:=\mu(p_z,p,\overline{s}_p,d)
		:=\overline{s}_p-d\,\ssgrklam{\frac{1}{p}-\frac{1}{p_z}},
\end{equation*}
then
\begin{equation}\label{eq:alpha:bound}
	\overline{\alpha}_p\leq \overline{s}_p+\overline{s}_p\cdot\frac{\overline{s}_p-z }{z-\mu }=\overline{s}_p\cdot\frac{\overline{s}_p-\mu }{z-\mu }.
\end{equation}
\end{theorem}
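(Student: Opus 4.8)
The plan is to prove the two assertions \eqref{eq:z:s:alpha} and \eqref{eq:alpha:bound} separately, relying only on the elementary embeddings collected in \autoref{prop:embeddings} and on the generalized complex interpolation of Besov spaces from \autoref{prop:interpol}.

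For \eqref{eq:z:s:alpha}, the inequality $z\le\overline{s}_p$ I would get by fixing $s<z$, choosing $s'\in(s,z)$, and pushing the inclusion $S(\Omega)\subseteq B^{s'}_{p_z,p_z}(\Omega)$ through $B^{s'}_{p_z,p_z}(\Omega)\hookrightarrow B^{s}_{p,\min\{p,2\}}(\Omega)\hookrightarrow F^{s}_{p,2}(\Omega)=W^{s}_p(\Omega)$, where the first embedding uses $s'>s$ together with $p\le p_z$ and boundedness of $\Omega$; since $s<z$ is arbitrary this gives $\overline{s}_p\ge z$. For $\overline{s}_p\le\overline{\alpha}_p$ I would fix $0<\alpha<\overline{s}_p$, pick $s\in(\alpha,\overline{s}_p)$ and $\tau$ with $1/\tau=\alpha/d+1/p$ (hence $\tau<p$), and run $S(\Omega)\subseteq W^{s}_p(\Omega)=F^{s}_{p,2}(\Omega)\hookrightarrow B^{s}_{p,\max\{p,2\}}(\Omega)\hookrightarrow B^{s}_{\tau,\max\{p,2\}}(\Omega)\hookrightarrow B^{\alpha}_{\tau,\tau}(\Omega)$, using boundedness of $\Omega$ in the middle step and $\alpha<s$ at the end; thus $\overline{\alpha}_p\ge\alpha$ for every $\alpha<\overline{s}_p$, with the cases $\overline{s}_p=\infty$ read accordingly.

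For the bound \eqref{eq:alpha:bound}, set $\delta:=1/p-1/p_z>0$, so that $d\delta=\overline{s}_p-\mu>0$; note that $z>\mu$ already forces $\overline{s}_p<\infty$. Fix $0<\alpha<\overline{\alpha}_p$ and $s<z$: then $S(\Omega)$ lies in $B^{\alpha}_{\tau,\tau}(\Omega)\cap B^{s}_{p_z,p_z}(\Omega)$ with $1/\tau=\alpha/d+1/p$, hence in $[B^{\alpha}_{\tau,\tau}(\Omega),B^{s}_{p_z,p_z}(\Omega)]_\theta$ for every $\theta\in(0,1)$. The key step is the choice $\theta:=\alpha/(d\delta+\alpha)$; a short computation with the parameter formulas of \autoref{prop:interpol} then shows that the integrability index of this interpolation space equals $p$, so that $S(\Omega)\subseteq B^{s_\theta}_{p,p}(\Omega)$ with $s_\theta=(1-\theta)\alpha+\theta s=\alpha(d\delta+s)/(d\delta+\alpha)$, whence $S(\Omega)\subseteq W^{s'}_p(\Omega)$ for all $s'<s_\theta$ and therefore $\overline{s}_p\ge s_\theta$. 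Since $s_\theta$ is increasing in $s$, and also in $\alpha$ once $d\delta+s>0$ (which holds for $s$ near $z$, as $d\delta+z=\overline{s}_p+(z-\mu)>\overline{s}_p$), letting $s\uparrow z$ and $\alpha\uparrow\overline{\alpha}_p$ yields $\overline{s}_p\ge\overline{\alpha}_p(d\delta+z)/(d\delta+\overline{\alpha}_p)$. This in particular rules out $\overline{\alpha}_p=\infty$ (otherwise the right-hand side would equal $d\delta+z>\overline{s}_p$), and rearranging --- clearing denominators and dividing by $d\delta+z-\overline{s}_p=z-\mu>0$ --- gives $\overline{\alpha}_p\le\overline{s}_p\,d\delta/(z-\mu)=\overline{s}_p(\overline{s}_p-\mu)/(z-\mu)$, which is \eqref{eq:alpha:bound}; the first form follows from $\overline{s}_p-\mu=(\overline{s}_p-z)+(z-\mu)$.

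I expect the interpolation step to be the only genuinely delicate point: because $1/\tau=\alpha/d+1/p$ may be large, $B^{\alpha}_{\tau,\tau}(\Omega)$ is a genuine quasi-Banach space (possibly with $\tau<1$), so the classical Calderón complex method is unavailable and one must invoke the Kalton--Mayboroda--Mitrea extension of complex interpolation, transferred from $\R^d$ to the bounded Lipschitz domain $\Omega$ by the usual extension/restriction argument --- precisely what \autoref{prop:interpol} supplies, including the endpoint $p_z=\infty$. A secondary subtlety is that $\overline{\alpha}_p=\infty$ must be \emph{excluded} by the argument rather than assumed away; the remaining manipulations (the embedding chains and the algebra of the final rearrangement) are routine.
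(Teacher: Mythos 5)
Your proposal is correct and follows essentially the same route as the paper: both arguments hinge on interpolating (via the Kalton--Mayboroda--Mitrea extension, \autoref{prop:interpol}) between a space $B^{s}_{p_z,p_z}(\Omega)$, $s<z$, and an adaptivity-scale space $B^{\alpha}_{\tau,\tau}(\Omega)$ with the parameter $\theta=\alpha/(d\delta+\alpha)$ chosen so that the resulting integrability index equals $p$, and then comparing the resulting smoothness with the maximality of $\overline{s}_p$. The only difference is presentational --- you run the argument directly (taking suprema over $s<z$ and $\alpha<\overline{\alpha}_p$ and rearranging) where the paper argues by contradiction --- and the elementary embedding chains you use for \eqref{eq:z:s:alpha} match the paper's use of \autoref{prop:embeddings}.
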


Before we give a proof of this theorem, let us make some remarks. We start with a sufficient condition for the additional regularity assumption. 

\begin{remark}\label{rem:sufficient}
Let $0<p_z<\infty$ and $z\in\R$. Then, by classical embedding theorems for Besov and Triebel-Lizorkin spaces, as collected in~\autoref{prop:embeddings}, the assertion
\begin{equation*}
	S(\Omega)\subseteq A^z_{p_z,q_z}(\Omega) \quad \text{for some}\quad A\in\{B,F\}\quad \text{and}\quad 0<q_z\leq \infty,
\end{equation*}
is sufficient for
\begin{equation*}
	S(\Omega)\subseteq B^s_{p_z,p_z}(\Omega)\quad\text{for all }\quad s<z.
\end{equation*}
Moreover, so is
\begin{equation*}
	S(\Omega)\subseteq A^s_{p_z,q_z}(\Omega) \quad \text{for some}\quad A\in\{B,F\},\quad 0<q_z\leq \infty, \quad \text{and all}\quad s<z.
\end{equation*}
If $A=B$, then these implications also hold for $p_z=\infty$.
\end{remark}

\begin{remark}
In principle, $S(\Omega)$ could be any subset of some Besov/Triebel-Lizorkin space. But even if we restrict ourselves to solution sets for operator equations, there are several different interpretations:
On the one hand, we may think of \emph{one particular} problem given by a fixed operator $L$ acting on functions defined on a fixed domain $\Omega$ with fixed right-hand side and fixed initial/boundary conditions if necessary.
Then $S(\Omega)$ only contains solutions for this particular situation and we probably even have $\# S(\Omega)=1$ such that $\overline{s}_p$ and $\overline{\alpha}_p$ describe smoothness properties of \emph{one particular function}. 
On the other hand, we may also think of solution sets for \emph{classes} of problems such as, e.g.,
\begin{itemize}
	\item[(i)] a fixed equation (like the Poisson equation $\Delta u = f$ with zero Dirichlet boundary condition $u_{|\partial\Omega}=0$) on a fixed domain $\Omega$  (e.g., the standard L-shape domain in $d=2$) with \emph{variable} right-hand side from a certain class of functions (e.g., arbitrary $f\in L_2(\Omega)$), or
	\item[(ii)] a class of operator equations (e.g., all linear, second order PDEs with smooth coefficients) on a fixed domain $\Omega$ with, say, smooth right-hand sides,
\end{itemize}
and so forth.
Since in this case $S(\Omega)$ collects \emph{all} functions which solve at least one admissible problem instance, here $\overline{s}_p$ and $\overline{\alpha}_p$ describe \emph{lower bounds} for the regularity of solutions to the hardest possible problem in the respective class. For example, $u^*\equiv 0$ solves the problem described in (i) for $f\equiv 0$. Hence, $u^*\in S(\Omega)$ and $u^*\in \bigcap_{s>0} W^s_2(\Omega)$, but $\overline{s}_2=5/3<\infty$, see also Remark~\ref{rem:grisvard} below. 

We could even go one step further and consider classes of problems like
\begin{itemize}
	\item[(iii)] a fixed equation considered on a class of domains (e.g., all bounded $\cont^1$ domains) with certain restrictions on the right-hand side and/or on initial/boundary conditions.
\end{itemize}
However, then the notation would get more complicated such that in the sequel we restrict ourselves to the cases mentioned above.
\end{remark}

\begin{remark}\label{rem:main}
Throughout this remark, we assume that we are in the setting of \autoref{thm:main:new}.
\begin{enumerate}[align=right,label=\textup{(\roman*)}] 
	\item\label{rem:main:q} Note that, due to standard embeddings of Besov and Triebel-Lizorkin spaces (as provided in \autoref{prop:embeddings} in the appendix), 
for $A\in\{B,F\}$ and $0<q\leq\infty$ we have
\begin{align*}
	\overline{s}_p
	=\overline{s}_p(S(\Omega))
	=\overline{s}_{p,q,A}(S(\Omega))
	:=\sup\ggklam{s>0 \sep S(\Omega)\subseteq A^s_{p,q}(\Omega)}.
\end{align*}
That is, the limit regularity index $\overline{s}_{p,q,A}$ does not depend on the microscopic parameter~$q$, nor on the type of the spaces $A$ (Besov vs.\ Triebel-Lizorkin). Moreover, it coincides with $\overline{s}_p$ defined in~\eqref{eq:def:sbar}.
In particular,
\begin{equation}\label{eq:bars:p:B}
	\overline{s}_p
	=\overline{s}_{p,p,B}(\Omega)=\sup\ggklam{s>0 \sep S(\Omega)\subseteq B^s_{p,p}(\Omega)}
\end{equation}
and also
\begin{equation*}
	\overline{s}_p
	=\overline{s}_{p,\infty,B}(S(\Omega))
	=\sup\ggklam{s>0 \sep S(\Omega)\subseteq B^s_{p,\infty}(\Omega)},
\end{equation*}
where the latter quantity is defined by means of the slightly larger Besov spaces~$B^s_{p,\infty}(\Omega)$ which coincide with the approximation spaces $\mathcal{A}_\infty^{s/d}(L_p(\Omega))$ w.r.t.\ non-adaptive algorithms based on uniform refinement, see, e.g.,~\cite{DeV1998} for details. 

	\item\label{it:rem:2} Due to the generalization of Sobolev's embedding theorem to Besov spaces (as presented in~\autoref{prop:embeddings}\ref{it:embedd:sob:sharp}), a space $B^\alpha_{\tau,\tau}(\Omega)$ from the adaptivity scale~\eqref{eq:scale} is embedded into every other space $B^{\alpha_0}_{\tau_0,\tau_0}(\Omega)$, $1/\tau_0=\alpha_0/d+1/p$, 
	 from the same scale with $0\leq \alpha_0 < \alpha$.  
	However, as a consequence of the sharpness of Sobolev embeddings, the space $B^\alpha_{\tau,\tau}(\Omega)$ is not embedded in $A^s_{p,q}(\Omega)$ for any $A\in\{B,F\}$, $0<q\leq\infty$, and $s>0$, as this 
	 combined with \autoref{prop:embeddings}\ref{it:embedd:Sob:eps} 
	would contradict the `only if' part of~\autoref{prop:embeddings}\ref{it:embedd:sob:sharp}.
	 Therefore,  
	it is not possible to obtain a non-trivial upper bound for $\overline{\alpha}_p$ in terms of $\overline{s}_p$ without further assumptions on $S(\Omega)$.

	\item\label{rem:main:proof} In~\autoref{fig:DeVoreTriebel01} we use a DeVore-Triebel diagram to visualize our upper bound~\eqref{eq:alpha:bound} for~$\overline{\alpha}_p$ and the corresponding proof idea, given that $\overline{s}_p<\infty$.
The bound $\overline{s}_p\cdot(\overline{s}_p-\mu )/(z-\mu )$ in~\eqref{eq:alpha:bound} is precisely the ordinate of the intersection point of the (dashed) line through $(1/p_z,z)$ and $(1/p,\overline{s}_p)$ with the $L_p(\Omega)$-Sobolev embedding line~\eqref{eq:Lp-sob-embedding}.
Therefore, by elementary geometry, for every $\alpha>\overline{s}_p\cdot(\overline{s}_p-\mu )/(z-\mu )$, there exists $\tilde{z}<z$, such that the (solid) line through $(1/p_z,\tilde{z})$ and $(\alpha/d+1/p,\alpha)$ contains a point~$(1/p,s)$ for some $s>\overline{s}_p$.
Since $S(\Omega)\subseteq B^{\tilde{z}}_{p_z,p_z}(\Omega)$ for all $\tilde{z}<z$, the claim $S(\Omega)\subseteq B^\alpha_{\tau,\tau}(\Omega)$ for such an $\alpha$ would thus contradict the maximality of $\overline{s}_p$, see also~\eqref{eq:bars:p:B}.

\begin{figure}
\begin{center}
\begin{tikzpicture}

\draw [->,>=stealth'] (0,-0.05) -- (0,5.8) node[above] {$r$};
\draw [->,>=stealth'] (-0.05,0) -- (5.5,0) node[right] {$\displaystyle\frac{1}{\rho}$};

\draw [red, thick] (0.5,0)  -- (0.5,2.5);
\draw (0.5,-0.05) -- (0.5,0.05) node[below] {$\frac{1}{p_z}$};
\draw (1.5,-0.05) -- (1.5,0.05) node[below] {$\frac{1}{p}$};
\draw (-0.3,2.5) node {$z$};
\draw [dashed] (-0.05,2.5) -- (0.5,2.5); 

\draw [blue,thick] (1.5,0) -- (3.2,3.4);
\draw [blue,thick,dotted] (3.2,3.4)--(3.5,4);
\draw [dashed](-0.05,3.4)--(3.2,3.4);
\draw [dashed](-0.05,4.81)--(3.8,4.81);
\draw (-0.3,3.5) node {$\overline{\alpha}_p$};
\draw (-0.3,4.81) node {$\alpha$};
\draw [blue, dotted, thick] (3.5,4) -- (4.5,6);
\draw (4.5,5.7) node[right] {$\displaystyle\frac{1}{\tau}=\frac{\alpha}{d}+\frac{1}{p}$} ;

\draw [thick, blue] (1.5,0) -- (1.5,3) ;
\draw [thick, blue, dotted] (1.5,3)--(1.5,6) ;
\draw [dashed] (-0.05,3) -- (1.5,3);
\draw (-0.3,3) node {$\overline{s}_p$};

\draw [thick, blue, dotted](0.25,0.5)--(1.5,3);
\draw [thick, blue, dashed](0.5,2.5)--(3.5,4);
\draw  (0.5,2.35)--(3.9,4.81);
\draw [->, >=stealth'](-0.6,4.2)--(-0.05,4);
\draw (-1.7,4.2) node {$\displaystyle\overline{s}_p\cdot\frac{\overline{s}_p-\mu}{z-\mu}$};
\draw [dashed] (-0.05,4)--(3.5,4);
\draw (1.5,3) node[circle,draw=black, fill=white, inner sep=1pt]  {};
\draw (0.5,2.5) node[circle,draw=black, fill=white, inner sep=1pt]  {};
\draw (3.2,3.4) node[circle,draw=black, fill=white, inner sep=1pt]  {};
\draw [->, >=stealth'](-0.3,0.7)--(0.35,0.7);
\draw (-0.3,0.7) node[left] {$\mu(\cdot,p,\overline{s}_p,d)$};
\draw [dashed](-0.05,2.35)--(0.5,2.35);
\draw [->, >=stealth'](-0.5,2)--(-0.05,2.35);
\draw (-0.5,2) node[left] {$\tilde{z}$};
\end{tikzpicture}
\caption[DeVoreTriebel01]{\begin{tabular}[t]{l}Visualization of statement and proof of Assertion~\eqref{eq:alpha:bound}  \\ from~\autoref{thm:main:new}  in a DeVore-Triebel diagram.\end{tabular}}\label{fig:DeVoreTriebel01}
\end{center}
\end{figure}
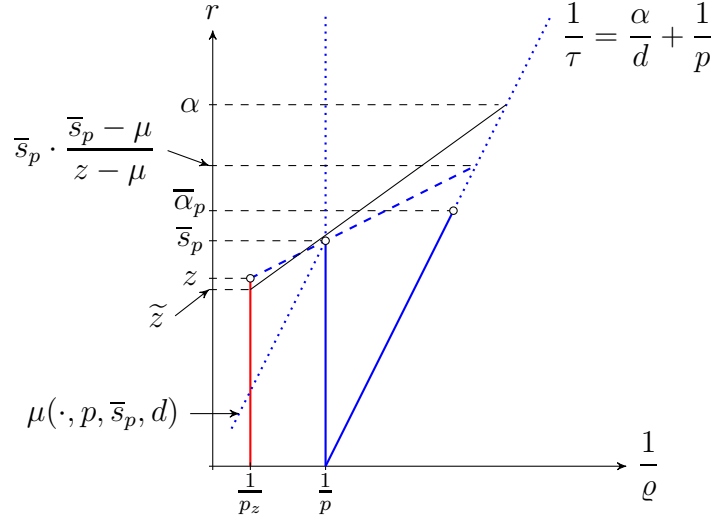

\item\label{rem:main:fail} The proof idea above obviously fails if $z\leq \mu(p_z,p,\overline{s}_p,d)$, i.e., if the point $(1/p_z,z)$ is below or exactly on the Sobolev embedding line $\ggklam{(1/\tilde{p},\tilde{s}) \sep \tilde{s}=\overline{s}_p-d\,(1/p-1/\tilde{p})}$ through~$(1/p,\overline{s}_p)$. In this case the line through~$(1/p_z,z)$ and $(1/p,\overline{s}_p)$ does not intersect with the corresponding $L_p(\Omega)$-Sobolev embedding line~\eqref{eq:Lp-sob-embedding}.

Actually, it is clear that we cannot even expect to obtain a non-trivial bound on $\overline{\alpha}_p$ if we only know that 
$S(\Omega)\subseteq B^s_{p_z,p_z}(\Omega)$ for all $s<z\leq \mu(p_z,p,\overline{s}_p,d)$, since this is already implied by Sobolev's embedding theorem (see~\autoref{prop:embeddings}\ref{it:embedd:Sob:eps}).
Thus, assuming this does not add any additional information about $S(\Omega)$ and we cannot expect to be able to establish a non-trivial bound on $\overline{\alpha}_p$, see also~\ref{it:rem:2} above.
In the limiting case, i.e., if $z=\mu$, then assuming that $S(\Omega)\subseteq A^z_{p_z,q_z}(\Omega)$ for some $A\in\{B,F\}$ and $0<q_z\leq \infty$ as in~\autoref{rem:sufficient} may or may not constitute an additional assumption on~$S(\Omega)$. However, also in this case it is not possible to establish a non-trivial bound for~$\overline{\alpha}_p$. Counterexamples can easily be constructed in terms of standard representatives of Besov and Triebel-Lizorkin spaces; see, in particular, \cite[Lemma~2.3.1.1]{RunSic1996}.

\item\label{rem:main:lower} The proof technique described in~\ref{rem:main:proof} above may also be used in order to derive, for instance,
\begin{itemize}
	\item the lower bound 
\begin{equation*}
	\tilde{s}_p:=\overline{\alpha}_p\cdot\frac{z+d\,(1/p-1/p_z)}{\overline{\alpha}_p+d\,(1/p-1/p_z)}  
\end{equation*} 
for $\overline{s}_p$, provided we are given $\overline{\alpha}_p>0$ and $S(\Omega)\subseteq A^z_{p_z,q_z}(\Omega)$ for some $A\in\{B,F\}$, $p_z>p$, $0<q_z\leq\infty$, and $z\in\R$, or

	\item an upper bound for $\overline{s}_{\widehat{p}}$ for some $\widehat{p}>p$, given $\overline{s}_p$, as well as $S(\Omega)\subseteq A^z_{p_z,q_z}(\Omega)$ for some $A\in\{B,F\}$, $z>\overline{s}_p$, and $p_z<p$.
\end{itemize}
In~\autoref{sect:Poisson}, we are going to use the latter in order to determine $\overline{s}_p$, $1<p<\infty$, for the Poisson equation with smooth right-hand sides and zero Dirichlet boundary conditions on a bounded $\cont^1$ domain constructed by Jerison and Kenig~\cite{JK95}.

	\item Further assumptions of the type $S(\Omega)\subseteq A^{\tilde{z}}_{\tilde{p}_z,\tilde{q}_z}(\Omega)$ for some $A\in\{B,F\}$, as well as $1<p<p_z<\tilde{p}_z\leq \infty$ (with finite $\tilde{p}_z$ if $A=F$), $0<\tilde{q}_z\leq\infty$, and $\tilde{z}\in\R$ lead to an improvement of the upper bound for $\overline{\alpha}_p$ by means of the proof technique described in~\ref{rem:main:proof} only if the point $(1/\tilde{p}_z,\tilde{z})$ lies strictly above the line through the two points~$(1/p_z,z)$ and $(1/p,\overline{s}_p)$ in the DeVore-Triebel diagram. Moreover, by complex interpolation it becomes obvious that the set of parameters
\begin{align*}
	\left\{ \left(\frac{1}{\rho},s\right)\in [0,\infty)^2 \sep S(\Omega)\subseteq B^{s}_{\rho,\rho}(\Omega) \right\}
\end{align*}
is necessarily convex and that each $(1/\rho,\overline{s}_\rho)$ with $0<\rho\leq \infty$ belongs to its boundary.

\item For $1<p<\infty$, the regularity of a function in the scale~\eqref{eq:scale} is intimately related to the convergence rate of the best $N$-term approximation, if the error is measured in~$L_p(\Omega)$.
However, if the error is to be measured in the norm of some other Sobolev space~$W^r_{p}(\Omega)$ with $r>0$ (describing, for instance, the energy space), then the scale changes to
\begin{equation*}
	B^\alpha_{\tau,\tau}(\Omega), \qquad \frac{1}{\tau}=\frac{\alpha-r}{d}+\frac{1}{p}, \quad \alpha > r.
\end{equation*}
Since this is just a shift of the $L_p(\Omega)$-Sobolev embedding line, our analysis carries over to this case mutatis mutandis. For the ease of presentation we omit the details. Moreover, we can replace the underlying Lipschitz domain $\Omega$ by a (patchwise smooth) manifold; cf.\ \cite{DahHar+2018, DahWei2015, Wei2016}.
\end{enumerate}
\end{remark}

 We close this section with a detailed proof of~\autoref{thm:main:new}. 

\begin{proof}[{Proof of~\autoref{thm:main:new}}]
Relation~\eqref{eq:z:s:alpha} follows by contradiction due to the fact that for all $0<p_1<p_0<\infty$ and $s_1<s_0$ there holds $B^{s_0}_{p_0,p_0}(\Omega)\hookrightarrow B^{s_1}_{p_1,p_1}(\Omega)$, see~\autoref{prop:embeddings}\ref{it:embedd:Sob:eps}. This embedding also implies that $\overline{\alpha}_p=\infty$ if $\overline{s}_p=\infty$. Thus, we are left with proving~\eqref{eq:alpha:bound} for $\overline{s}_p<\infty$.
Again we argue by contradiction.
Assume $S(\Omega)\subseteq B^\alpha_{\tau,\tau}(\Omega)$, $1/\tau=\alpha/d+1/p$, for some $\alpha>\overline{s}_p\cdot (\overline{s}_p-\mu )/(z-\mu )$. 
Since $
S(\Omega)\subseteq B^{\tilde{z}}_{p_z,p_z}(\Omega)$ for all $\tilde{z}<z$, 
we also know that $S(\Omega)\subseteq B^{\tilde{s}}_{\tilde{p},\tilde{p}}(\Omega)$ with $\tilde{s}=(1-\theta)\,\tilde{z}+\theta\, \alpha$ and $1/\tilde{p}=(1-\theta)/p_z+\theta/\tau$ for all $\theta\in (0,1)$, see~\autoref{prop:interpol}.
In particular, if we choose 
\begin{equation*}
	\theta
	=\theta_0
	:=\frac{1/p-1/p_z}{1/\tau-1/p_z}
	=\frac{1/p-1/p_z}{\alpha/d+1/p-1/p_z}
	=\frac{\overline{s}_p-\mu }{\alpha+\overline{s}_p-\mu}
\in (0,1),
\end{equation*}
we obtain $S(\Omega)\subseteq B^{\tilde{s}}_{p,p}(\Omega)$ for all $\tilde{s}=(1-\theta_0)\,\tilde{z}+\theta_0\,\alpha$ with $\tilde{z}<z$. 
Since $\alpha>\overline{s}_p\cdot(\overline{s}_p-\mu )/(z-\mu )$, we have 
\begin{align*}
	(1-\theta_0)\,z+\theta_0\,\alpha
	=\frac{\alpha\,(z+\overline{s}_p-\mu )}{\alpha+\overline{s}_p-\mu }
	=\frac{z+\overline{s}_p-\mu }{1+(\overline{s}_p-\mu)/\alpha}
	>\frac{z+\overline{s}_p-\mu }{(z+\overline{s}_p-\mu)/\overline{s}_p}
	=\overline{s}_p.
\end{align*}
Therefore, there exists $\tilde{z}<z$, such that $s:=(1-\theta_0)\,\tilde{z}+\theta_0\,\alpha>\overline{s}_p$, which means that $S(\Omega)\subseteq B^{s}_{p,p}(\Omega)$ for some $s>\overline{s}_p$. But this contradicts the maximality of $\overline{s}_p$.
\end{proof}

\section{Examples}\label{sect:example}
In this section we apply \autoref{thm:main:new} to three sample problems: the Poisson equation, the $p$-Poisson problem, and the inhomogeneous stationary Stokes equation.

\subsection{The Poisson equation}\label{sect:Poisson}
Let us consider the Poisson equation with zero Dirichlet boundary conditions 
\begin{equation}\label{eq:Poisson}
	\left.
	\begin{alignedat}{3}
	\Delta u &= f \quad \text{on } \Omega,	\\
	u &=0  \quad\, \text{on } \partial\Omega,\;	\\
	\end{alignedat}
	\right\}	
\end{equation}
on a bounded Lipschitz domain $\Omega\subseteq \R^d$, $d\geq 2$.  Points where the boundary $\partial\Omega$ of the underlying domain $\Omega$ is not smooth  are known to have negative effects on the regularity of the solution~$u$ to~\eqref{eq:Poisson}.
While on smooth domains we have the usual shift 
\begin{equation*}
	f\in W^{s-2}_p(\Omega) \quad \Longrightarrow \quad u\in W^{s}_p(\Omega),
\end{equation*}
this mechanism fails if we allow the boundary of $\Omega$ to be merely $\cont^1$. In this case, for instance, $f\in W^{-1/2}_2(\Omega)$ does not necessarily imply $u\in W^{3/2}_2(\Omega)$.
This problem has been intensively studied in~\cite{JK95} by Jerison and Kenig; see also~\cite{FabMenMit1998,May2005}. Therein one may find a precise description of the range of parameters $(1/p,s)$ that allow for shift theorems for Equation~\eqref{eq:Poisson} in Bessel potential spaces and in Besov spaces.
The sharpness of this range is underpinned by several counterexamples, see, in particular,~\cite[Section~6]{JK95}.
Motivated by these results and by the relevance of the regularity in Sobolev spaces and in the scales~\eqref{eq:scale} of Besov spaces in (non-)linear approximation theory, Dahlke and DeVore~\cite{DahDeV1997} analyzed the regularity of the Poisson equation in Besov spaces with integrability parameter less than one.
Put together, the positive results from~\cite{JK95} and \cite{DahDeV1997} guarantee the following: If we are only interested in the  consequences of the lack of boundary smoothness 
and therefore assume that $f\in\cont^\infty(\overline{\Omega})$, then the solution $u\in W^1_{2,0}(\Omega)$ to the corresponding equation~\eqref{eq:Poisson} is contained in every Besov space~$B^r_{q,q}(\Omega)$ represented by a point $(1/q,r)$ within the shaded area in the DeVore-Triebel diagram in~\autoref{fig:DeVoreTriebel02}.
Using the terminology from the previous sections, we set
\begin{align}\label{eq:Poisson:S}
	S(\Omega):=\ggklam{u\in W^1_{2,0}(\Omega) \sep \Delta u\in \cont^\infty(\overline{\Omega})}.
\end{align}
Then 
\begin{align*}
	S(\Omega)\subseteq B^r_{q,q}(\Omega) \qquad\text{for all}\quad
	0<r<1+\frac{1}{q} \quad\text{and}\quad
	0<\frac{1}{q}<\frac{d+1}{d-1}
\end{align*}
such that, in particular,
\begin{align}\label{eq:JK:DahDeV:bar}
	\overline{s}_p(S(\Omega))\geq 1+\frac{1}{p}
	\qquad\text{and}\qquad
	\overline{\alpha}_p(S(\Omega))\geq \ssgrklam{1+\frac{1}{p}}\frac{d}{d-1}
\end{align}
for every $1<p<\infty$. 
The following theorem asserts the existence of bounded $\cont^1$ domains on which these lower bounds for $\overline{s}_p$ and $\overline{\alpha}_p$ become also upper bounds.

\begin{theorem}\label{thm:Poisson:bounds}
	For $d\geq 2$, there exists a bounded $\cont^1$ domain $\Omega\subseteq\R^d$ such that if $S(\Omega)$ is defined as in~\eqref{eq:Poisson:S}, then for arbitrary $1<p<\infty$ there holds
	\begin{align*}
		\overline{s}_p(S(\Omega))= 1+\frac{1}{p}
		\qquad\text{and}\qquad
		\overline{\alpha}_p(S(\Omega))= \ssgrklam{1+\frac{1}{p}}\frac{d}{d-1}.
	\end{align*}
\end{theorem}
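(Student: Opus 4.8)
The plan is to combine the abstract machinery of \autoref{thm:main:new} with the two families of sharp counterexamples that are already available in the literature. The lower bounds in \eqref{eq:JK:DahDeV:bar} hold on \emph{every} bounded Lipschitz domain, so the whole content of the theorem is the matching \emph{upper} bounds, and these must be witnessed by a carefully chosen domain. The natural candidate is (a variant of) the bounded $\cont^1$ domain $\Omega\subseteq\R^d$ constructed by Jerison and Kenig in \cite[Section~6]{JK95}, whose counterexamples show that the shift estimate $f\in W^{s-2}_2(\Omega)\Rightarrow u\in W^s_2(\Omega)$ fails for $s>3/2$; more precisely, their construction produces a solution $u$ with $\Delta u\in\cont^\infty(\overline\Omega)$ (in fact $\Delta u\equiv 0$ near the critical boundary point, with smooth data) for which $u\notin W^s_2(\Omega)$ for any $s>3/2=1+1/2$. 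Hence for $p=2$ one gets $\overline s_2(S(\Omega))\leq 3/2$, and together with \eqref{eq:JK:DahDeV:bar} this pins down $\overline s_2(S(\Omega))=3/2$.

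The first genuine step is therefore to promote the $p=2$ bound $\overline s_2=1+1/2$ to $\overline s_p=1+1/p$ for \emph{all} $1<p<\infty$ simultaneously. The lower bound is already in \eqref{eq:JK:DahDeV:bar}. For the upper bound when $p<2$, the idea indicated in \autoref{rem:main}\ref{rem:main:lower} (last bullet) is exactly what is needed: we know $\overline s_2(S(\Omega))=1+1/2$, and from Jerison--Kenig / Dahlke--DeVore we know $S(\Omega)\subseteq B^z_{p_z,p_z}(\Omega)$ for suitable $z>\overline s_2$ with $p_z<2$ (e.g.\ along the boundary $z<1+1/q$ of the shaded region, with $1/q$ slightly less than $(d+1)/(d-1)$); running the geometric/interpolation argument of the proof of \autoref{thm:main:new} ``in reverse'' then produces an upper bound for $\overline s_{\widehat p}$ at any $\widehat p\in(2,\infty)$ — wait, for $p<2$ one instead interpolates the $p=2$ information against the known high-integrability Besov regularity to get an upper bound on the whole curve $\widehat p\mapsto \overline s_{\widehat p}$; concretely, convexity of the set $\{(1/\rho,s):S(\Omega)\subseteq B^s_{\rho,\rho}(\Omega)\}$ (noted in \autoref{rem:main}\ref{rem:main:lower}), together with the two boundary points $(1/2,3/2)$ and $(1/q,1+1/q)$, forces every boundary point $(1/\rho,\overline s_\rho)$ to satisfy $\overline s_\rho\leq 1+1/\rho$ — one checks that the line through those two points is precisely $s=1+1/\rho$. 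This is the step I expect to be the main obstacle: one has to be careful that the Jerison--Kenig counterexample domain can be taken to simultaneously realize the sharp constant $3/2$ at $p=2$ \emph{and} lie on the Dahlke--DeVore line $r=1+1/q$ for small $1/q$, and that the single function provided by the counterexample (not merely the class $S(\Omega)$) has all the claimed negative and positive regularity properties.

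Once $\overline s_p(S(\Omega))=1+1/p$ is established for all $1<p<\infty$, the upper bound $\overline\alpha_p(S(\Omega))\leq (1+1/p)\,d/(d-1)$ follows by a direct application of \autoref{thm:main:new}. Indeed, fix $1<p<\infty$, set $\overline s_p=1+1/p$, and apply the theorem with $p_z$ chosen so that $1/p_z$ is close to $(d+1)/(d-1)$ (admissible since $S(\Omega)\subseteq B^s_{p_z,p_z}(\Omega)$ for all $s<z:=1+1/p_z$ by Dahlke--DeVore); one must check $z>\mu=\overline s_p-d(1/p-1/p_z)$, which holds for $1/p_z$ sufficiently large. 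The bound \eqref{eq:alpha:bound} then reads $\overline\alpha_p\leq \overline s_p(\overline s_p-\mu)/(z-\mu)$, and a short computation — this is exactly the ``ordinate of the intersection of the line through $(1/p_z,z)$ and $(1/p,\overline s_p)$ with the $L_p$-Sobolev embedding line'' observation of \autoref{rem:main}\ref{rem:main:proof}, in the limit $1/p_z\to(d+1)/(d-1)$, $z\to 1+1/p_z$ — yields the value $(1+1/p)\,d/(d-1)$. Combined with the lower bound in \eqref{eq:JK:DahDeV:bar}, this gives $\overline\alpha_p(S(\Omega))=(1+1/p)\,d/(d-1)$ and completes the proof. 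The final write-up is essentially bookkeeping once the domain and its critical solution are fixed; the conceptual weight is entirely in the first two paragraphs, i.e.\ in extracting sharp $\overline s_p$ for all $p$ from the existing $p=2$ counterexamples.
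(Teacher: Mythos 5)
Your overall strategy --- Jerison--Kenig's counterexample for the upper bound on $\overline{s}_p$, the Dahlke--DeVore positive results for the lower bounds, and \autoref{thm:main:new} for the upper bound on $\overline{\alpha}_p$ --- is the same as the paper's, but both of your key steps have concrete problems. First, your application of \autoref{thm:main:new} is not licensed by its hypotheses: you choose $p_z$ with $1/p_z$ close to $(d+1)/(d-1)>1$, i.e.\ $p_z<1<p$, whereas the theorem requires $p<p_z\leq\infty$. This is not cosmetic: the proof of \autoref{thm:main:new} interpolates $B^{\tilde z}_{p_z,p_z}(\Omega)$ against $B^\alpha_{\tau,\tau}(\Omega)$ with parameter $\theta_0=(1/p-1/p_z)/(1/\tau-1/p_z)$, which lies in $(0,1)$ only if $1/p_z<1/p<1/\tau$; with your choice $1/p$ is not between $1/p_z$ and $1/\tau$, so the interpolation never reaches the integrability index $p$ and no contradiction with the maximality of $\overline{s}_p$ can be produced (equivalently, your $\mu$ exceeds $z$ and the condition $z>\mu$ fails). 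The paper instead takes any $p_z\in(p,\infty)$ and uses the Jerison--Kenig \emph{positive} results to get $u\in B^s_{p_z,p_z}(\Omega)$ for all $s<z:=1+1/p_z$; since all such points $(1/p_z,1+1/p_z)$ lie on the line $r=1+1/\rho$, the resulting bound $\overline{s}_p(\overline{s}_p-\mu)/(z-\mu)=(1+1/p)\,d/(d-1)$ is independent of $p_z$ and no limiting procedure is needed. Your geometric picture gives the right number only because every point of that line produces the same intersection ordinate; the admissible points are the ones with $1/p_z<1/p$.

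Second, the step $\overline{s}_p\leq 1+1/p$ is not correctly reduced to the literature. The cited counterexample (\cite[Theorem~1.2(b)]{JK95}) provides $u\notin W^2_1(\Omega)$, not directly $u\notin W^s_2(\Omega)$ for $s>3/2$; and the simple embedding $W^s_2\hookrightarrow W^2_1$ fails for $3/2<s\leq 2$, so the latter does not follow for free. Moreover, your convexity argument anchored at the two points $(1/2,3/2)$ and $(1/q,1+1/q)$ cannot rule out $\overline{s}_p>1+1/p$ for $1<p<2$, because then $1/2$ does not lie between $1/p$ and $1/q$ and convexity gives no contradiction there. The correct anchor is the point $(1,2)$: assuming $u\in B^s_{p,p}(\Omega)$ with $1+1/p<s<2$, one interpolates against the Dahlke--DeVore regularity $u\in B^r_{q,q}(\Omega)$ with $1/q=1+\varepsilon$ and $r=2+\varepsilon(2-s)/(1-1/p)$ so as to land \emph{exactly} at $B^2_{1,1}(\Omega)\hookrightarrow W^2_1(\Omega)$, contradicting $u\notin W^2_1(\Omega)$; this works uniformly for all $1<p<\infty$ and is the content of \autoref{lem:JK:neg}\ref{lem:JK:neg:2}--\ref{lem:JK:neg:3}. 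With these two repairs your argument becomes the paper's proof.
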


Our proof of~\autoref{thm:Poisson:bounds} below is based on a counterexample by Jerison and Kenig of a~$\cont^1$ domain $\Omega\subseteq\R^d$, $d\geq 2$, for which there exists a function $f\in\cont^\infty(\overline{\Omega})$, such that the second derivatives of the solution $u\in W^1_{2,0}(\Omega)$ to the corresponding equation~\eqref{eq:Poisson} are not contained in $L_1(\Omega)$, thus $u\notin W^2_1(\Omega)$. We refer to~\cite[Theorem~1.2(b)]{JK95} for the statement and to \cite[Section~6]{JK95} for the corresponding counterexample.
For such a solution 
to~\eqref{eq:Poisson} we prove the following.

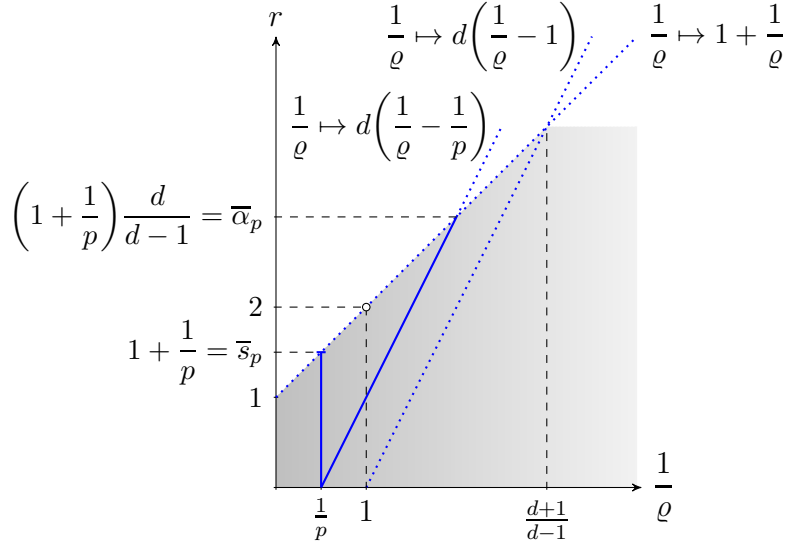
\begin{figure}
\begin{center}
\begin{tikzpicture}[scale=.6]

\shade [left color=gray!50, right color=gray!10, shading angle = 90] (0,0) -- (0,2) -- (6,8) -- (8,8) -- (8,0);
\draw [->,>=stealth'] (0,-0.05) -- (0,10) node[above] {$r$};
\draw [->,>=stealth'] (-0.05,0) -- (8.1,0) node[right] {$\displaystyle\frac{1}{\rho}$};

\draw [blue, thick, dotted] (2,0)--(7,10) node[left] {\color{black}\small $\displaystyle\frac{1}{\rho}\mapsto d\ssgrklam{\frac{1}{\rho}-1}$};

\draw (-0.05,2)--(0.05,2);
\draw (-0.05,2)  node[left] {\small $1$};
\draw [dashed] (6,-0.05)--(6,8);
\draw (6,-0.05) node[below] {$\frac{d+1}{d-1}$};

\draw [blue, dotted, thick] (4,6) -- (5,8);
\draw [blue, thick] (1,0) -- (4,6);
\draw [blue, dotted, thick] (0,2)--(8,10);
\draw (5,8) node[left] {{\small $\displaystyle\frac{1}{\rho}\mapsto d\ssgrklam{\frac{1}{\rho}-\frac{1}{p}}$}} ;
\draw (8,10) node[right] {{\small $\displaystyle\frac{1}{\rho}\mapsto 1+\frac{1}{\rho}$}} ;
\draw [blue,thick](1,0)--(1,3);
\draw [blue,thick] (0.9,3)--(1.1,3);
\draw (1,-0.05)--(1,0) node[below] {$\frac{1}{p}$};
\draw [dashed] (4,6)--(-0.05,6) node[left] {{\small $\displaystyle \ssgrklam{1+\frac{1}{p}}\frac{d}{d-1}=\overline{\alpha}_p$}};
\draw [dashed] (-0.05,3)--(1,3);
\draw (-0.05,3) node[left] {{\small $\displaystyle 1+\frac{1}{p}=\overline{s}_p$}};
\draw [dashed] (2,-0.05)--(2,4);
\draw (2,-0.05) node[below] { $1$};
\draw [dashed] (-0.05,4) -- (2,4);
\draw (-0.05,4) node[left] {\small $2$};

\draw (2,4) node[circle,draw=black, fill=white, inner sep=1pt]  {};

\end{tikzpicture}
\caption[DeVoreTriebel02]{\begin{tabular}[t]{l}Visualization of the Besov regularity of the Poisson equation with \\ smooth right-hand side on bounded $\cont^1$ domains in a DeVore-Triebel \\ diagram.\end{tabular}}\label{fig:DeVoreTriebel02}
\end{center}
\end{figure}

\begin{lemma}\label{lem:JK:neg}
	Let $d\geq 2$. 
	Moreover, let $\Omega\subseteq\R^d$ be a $\cont^1$ domain  for which there exists a function $f\in\cont^\infty(\overline{\Omega})$ such that the unique solution $u\in W^1_{2,0}(\Omega)$ to the corresponding Poisson equation~\eqref{eq:Poisson} satisfies $u\notin W^2_1(\Omega)$. 
	Then the following statements hold.
	\begin{enumerate}[align=right,label=\textup{(\roman*)}] 
		\item\label{lem:JK:neg:2} $u\notin B^2_{1,1}(\Omega)$.
		\item\label{lem:JK:neg:3} If $1<p<\infty$ and $\displaystyle s>1+\frac{1}{p}$, then $u\notin B^s_{p,p}(\Omega)$.
		\item\label{lem:JK:neg:4} $u\in F^{1+1/p}_{p,2}(\Omega)$ for all $2\leq p<\infty$.
		\item\label{lem:JK:neg:5} Let $1<p<\infty$ and let $0<\tau,\alpha<\infty$ be such that $\displaystyle \frac{1}{\tau}=\frac{\alpha}{d}+\frac{1}{p}$. Moreover, assume that
\[
\alpha>\tilde{\alpha}_p:=\ssgrklam{1+\frac{1}{p}}\frac{d}{d-1}
\qquad
\text{or}
\qquad
\alpha=\tilde{\alpha}_p\text{ and }\tau<1.
\]
%
%
Then $u\notin B^\alpha_{\tau,\tau}(\Omega)$.
	\end{enumerate}
\end{lemma}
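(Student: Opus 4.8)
The plan is to prove the four assertions in the order \ref{lem:JK:neg:2}, \ref{lem:JK:neg:4}, \ref{lem:JK:neg:3}, \ref{lem:JK:neg:5}, using only the sharp Sobolev-type embeddings collected in \autoref{prop:embeddings}, the complex interpolation of Besov spaces in the quasi-Banach range (\autoref{prop:interpol}), and the positive Besov regularity established above, namely $u\in S(\Omega)$ together with $S(\Omega)\subseteq B^r_{q,q}(\Omega)$ whenever $0<r<1+1/q$ and $0<1/q<(d+1)/(d-1)$. Assertion~\ref{lem:JK:neg:2} is immediate: the embedding $B^2_{1,1}(\Omega)\hookrightarrow W^2_1(\Omega)$ and the hypothesis $u\notin W^2_1(\Omega)$ force $u\notin B^2_{1,1}(\Omega)$. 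Assertion~\ref{lem:JK:neg:4} is a positive statement and I would deduce it from the Jerison--Kenig regularity theory for the Dirichlet problem on bounded $\cont^1$ domains \cite{JK95}: since $f\in\cont^\infty(\overline{\Omega})\subseteq H^{s-2}_p(\Omega)$ for every $s$, the corresponding shift theorem in Bessel potential spaces yields $u\in H^{1+1/p}_p(\Omega)=F^{1+1/p}_{p,2}(\Omega)$ for all $2\le p<\infty$. Combining this with the embedding $F^{1+1/p}_{p,2}(\Omega)\hookrightarrow B^{1+1/p}_{p,p}(\Omega)$, valid because $p\ge 2$, shows that for every $\rho\ge 2$ the point $(1/\rho,\,1+1/\rho)$ on the line $r=1+1/\rho$ in the DeVore--Triebel diagram represents a Besov space containing $u$; this endpoint regularity is the engine behind \ref{lem:JK:neg:5}.

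For \ref{lem:JK:neg:3} I would argue by contradiction, using that the set $R:=\{(1/\rho,r)\in[0,\infty)^2:\,u\in B^r_{\rho,\rho}(\Omega)\}$ is convex, a consequence of \autoref{prop:interpol}. Assume $u\in B^s_{p,p}(\Omega)$ for some $1<p<\infty$ and $s>1+1/p$, so that $(1/p,s)\in R$ lies strictly above the line $r=1+1/\rho$ and has abscissa $1/p<1$. An elementary computation shows that the straight line through $(1/p,s)$ and $(1,2)$ has slope $(2-s)/(1-1/p)$, which is $<1$ precisely because $s>1+1/p$; hence this line dips below the line $r=1+1/\rho$ immediately to the right of $1/\rho=1$. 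Choosing on it a point $(1/q,r)$ with $1/q$ slightly larger than $1$ we obtain $0<r<1+1/q$ and $1<1/q<(d+1)/(d-1)$, so $(1/q,r)\in R$. Since $(1,2)$ lies on the segment joining the two points $(1/p,s),(1/q,r)\in R$, convexity of $R$ yields $(1,2)\in R$, i.e.\ $u\in B^2_{1,1}(\Omega)$, contradicting \ref{lem:JK:neg:2}.

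For \ref{lem:JK:neg:5} I would again argue by contradiction and interpolate a hypothetical membership $u\in B^\alpha_{\tau,\tau}(\Omega)$, $1/\tau=\alpha/d+1/p$, against the endpoint regularity $u\in B^{1+1/p_z}_{p_z,p_z}(\Omega)$ from \ref{lem:JK:neg:4}, for a fixed $p_z\ge 2$ with $p_z>p$. A short calculation gives that $\alpha>\tilde{\alpha}_p$ is equivalent to $(1/\tau,\alpha)$ lying strictly above the line $r=1+1/\rho$, while $\alpha=\tilde{\alpha}_p$ places it on that line at abscissa $1/\tau=(p+d)/(p(d-1))$, and the extra hypothesis $\tau<1$ means exactly that this abscissa exceeds $1$. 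In the first case $1/p_z<1/p<1/\tau$, so the interpolation segment between $(1/p_z,1+1/p_z)$ and $(1/\tau,\alpha)$ passes through a point $(1/p,s^{\ast})$ lying strictly above the line $r=1+1/\rho$, i.e.\ with $s^{\ast}>1+1/p$; hence $u\in B^{s^{\ast}}_{p,p}(\Omega)$, contradicting \ref{lem:JK:neg:3}. In the second case both endpoints lie on the line $r=1+1/\rho$ and $1/p_z<1<1/\tau$, so the segment runs along that line and passes through $(1,2)$, whence $u\in B^2_{1,1}(\Omega)$, contradicting \ref{lem:JK:neg:2}. (Alternatively, the bound $\overline{\alpha}_p(\{u\})\le\tilde{\alpha}_p$ needed in the first case can be obtained directly from \autoref{thm:main:new} applied with $S(\Omega)=\{u\}$, $\overline{s}_p=1+1/p$ and $z=1+1/p_z$, once \ref{lem:JK:neg:3} and \ref{lem:JK:neg:4} are available.)

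The step I expect to be the main obstacle is \ref{lem:JK:neg:3} in the range $1+1/p<s\le 2$: there $B^s_{p,p}(\Omega)$ is \emph{not} embedded in $W^2_1(\Omega)$, so there is no direct contradiction, and one must invoke the positive regularity to sandwich the point $(1,2)$ between two admissible points and, crucially, keep track of the microscopic parameters so that the interpolated space is genuinely $B^2_{1,1}(\Omega)$ rather than a strictly larger space such as $B^2_{1,\infty}(\Omega)$, which is exactly what \ref{lem:JK:neg:2} excludes. All the interpolation steps above rely on the extension of complex interpolation to quasi-Banach Besov spaces, since one of the two endpoints typically has integrability parameter below $1$.
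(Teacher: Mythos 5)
Your parts \ref{lem:JK:neg:2}, \ref{lem:JK:neg:3} and \ref{lem:JK:neg:5} follow essentially the paper's own route: \ref{lem:JK:neg:2} via $B^2_{1,1}(\Omega)\hookrightarrow W^2_1(\Omega)$; \ref{lem:JK:neg:3} by interpolating the hypothetical point $(1/p,s)$ against a point of the Dahlke--DeVore positive-regularity region chosen \emph{on the line through $(1/p,s)$ and $(1,2)$} so as to land exactly on $B^2_{1,1}(\Omega)$ (the paper writes out the very same point, $1/q=1+\varepsilon$, $r=2+\varepsilon(2-s)/(1-1/p)$, via \cite[Theorem~4.1]{DahDeV1997}); and \ref{lem:JK:neg:5} by the same two interpolation pictures, the first of which the paper simply packages as an application of \autoref{thm:main:new}. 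Your insistence on tracking the microscopic parameter so that the interpolated space is genuinely $B^2_{1,1}(\Omega)$ is exactly the point that makes these steps work.

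The genuine gap is part \ref{lem:JK:neg:4}. You obtain $u\in H^{1+1/p}_p(\Omega)$ from ``the corresponding shift theorem in Bessel potential spaces'' applied to $f\in H^{s-2}_p(\Omega)$. But the shift theorems for the \emph{inhomogeneous} Dirichlet problem in \cite{JK95} (Theorems~1.1 and~1.3) are valid only on an open range of smoothness parameters whose upper edge is precisely the target $s=1+1/p$; the endpoint is not covered by them, and whether membership holds there --- and with which microscopic index --- is exactly the delicate question (note that at the analogous endpoint for $p=1$ the answer is negative by hypothesis, $u\notin W^2_1(\Omega)$). The paper therefore argues differently: extend $f$ to $\R^d$, solve $\Delta v=f$ globally, observe that $\tilde{u}:=v-u$ is \emph{harmonic} on $\Omega$ with trace in $F^1_{p,2}(\partial\Omega)$, and invoke the endpoint result \cite[Theorem~5.15(b)]{JK95} for harmonic functions with Dirichlet data in $F^1_{p,2}(\partial\Omega)$ (the regularity problem on $\cont^1$ domains), which yields $\tilde{u}\in F^{1+1/p}_{p,2}(\Omega)$ for $2\leq p<\infty$. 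Without some such argument, \ref{lem:JK:neg:4} is unproven, and this propagates into your proof of \ref{lem:JK:neg:5}: the case $\alpha>\tilde{\alpha}_p$ can be rescued by using only the open-range regularity $u\in B^s_{p_z,p_z}(\Omega)$, $s<1+1/p_z$, as the paper does via \autoref{thm:main:new}, but the borderline case $\alpha=\tilde{\alpha}_p$, $\tau<1$ unavoidably requires an interpolation endpoint lying \emph{on} the line $r=1+1/\rho$, i.e., the endpoint membership $u\in B^{1+1/p_z}_{p_z,p_z}(\Omega)$ for some $p_z\geq 2$, which only part \ref{lem:JK:neg:4} provides.
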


\begin{proof} We prove the four statements successively.
\begin{enumerate}
\item[\ref{lem:JK:neg:2}.] The assertion $u\in B^2_{1,1}(\Omega)$ would contradict our assumption that $u\notin W^2_1(\Omega)$ since $B^2_{1,1}(\Omega)\hookrightarrow W^2_1(\Omega)$, which follows, e.g., from~\cite[Theorem~2.3.8(i) \& Proposition~2.5.7(i)]{T83}.

\item[\ref{lem:JK:neg:3}.] Suppose that $u\in B^s_{p,p}(\Omega)$ for some $1<p<\infty$ and $s>1+1/p$. W.l.o.g.\ we may also assume that $s<2$. From \cite[Theorem~4.1]{DahDeV1997} we can deduce that $u\in B^{r}_{q,q}(\Omega)$ with $1/q=1+\varepsilon$ and $r=2+\varepsilon\, (2-s)/(1- 1/p )$ for all $0<\varepsilon<2/(d-1)$.
Then by~\autoref{prop:interpol} we have
\begin{equation*}
	u\in 
	\geklam{B^s_{p,p}(\Omega),B^r_{q,q}(\Omega)}_\theta = B^2_{1,1}(\Omega)
	\qquad\text{for}\qquad
	\theta=\frac{1-1/p}{1-1/p+\varepsilon}\in (0,1).
\end{equation*}
However, this contradicts~\ref{lem:JK:neg:2}.

\item[\ref{lem:JK:neg:4}.] We prove this assertion with an argument used in~\cite[point 4. on page~2167]{Cos2019}: Let us extend $f$ to the whole of $\R^d$ such that the extension (also denoted by $f$) is at least smooth enough to be contained in $F^{-1+1/p+\varepsilon}_{p,2}(\R^d)$ for some $\varepsilon>0$. Then the equation $\Delta v=f$ on $\R^d$ has a unique solution $v\in F^{1+1/p+\varepsilon}_{p,2}(\R^d)$ and $v|_{\partial\Omega}\in B^{1+\varepsilon}_{p,p}(\partial\Omega)\hookrightarrow F^1_{p,2}(\partial\Omega)$. Therefore, $\tilde{u}:=v-u$ is a harmonic function on $\Omega$ with trace $\tilde{u}|_{\partial\Omega}\in F^1_{p,2}(\partial\Omega)$. From \cite[Theorem~5.15(b)]{JK95} it thus follows that $\tilde{u}\in F^{1+1/p}_{p,2}(\Omega)$ and hence also $u=\tilde{u}-v\in F^{1+1/p}_{p,2}(\Omega)$.

\item[\ref{lem:JK:neg:5}.]  We first consider the case $\alpha>\tilde{\alpha}_p$.  Theorems 1.1 and 1.3 of  \cite{JK95} together with part~\ref{lem:JK:neg:3} imply that $\overline{s}_p:=\overline{s}_p(\{u\})=1+1/p$ for all $1<p<\infty$. Now fix $1<p<p_z<\infty$. Then, we may apply~\autoref{thm:main:new} with 
$z:=\overline{s}_{p_z}=1+1/p_z$ and
\begin{align*}
	\mu 
	= \overline{s}_p-d\,\ssgrklam{\frac{1}{p}-\frac{1}{p_z}}
	= 1+\frac{1}{p}-d\,\ssgrklam{\frac{1}{p}-\frac{1}{p_z}}
	= 1+\frac{1}{p_z}-(d-1)\ssgrklam{\frac{1}{p}-\frac{1}{p_z}}
	< 1+\frac{1}{p_z},
\end{align*}
to obtain
\begin{align*}
	\overline{\alpha}_p(\{u\})
	\leq \overline{s}_p\cdot\frac{\overline{s}_p-\mu}{\overline{s}_{p_z}-\mu}
	= \ssgrklam{1+\frac{1}{p}}\frac{d}{d-1}
	=\tilde{\alpha}_p
\end{align*}
which obviously proves~\ref{lem:JK:neg:5} if $\alpha>\tilde{\alpha}_p$.

 The fact that  $u\notin B^{\tilde{\alpha}_p}_{ \tau,\tau}(\Omega)$,  $1/\tau=\tilde{\alpha}_p/d + 1/p$,  if $\tau<1$ follows from parts~\ref{lem:JK:neg:2} and~\ref{lem:JK:neg:4} by another complex interpolation argument: Since $u \in F^{3/2}_{2,2}(\Omega)=B^{3/2}_{2,2}(\Omega)$ and the points $(1/2,3/2)$, $(1,2)$, and $(\tilde{\alpha}_p/d+1/p, \tilde{\alpha}_p)$ lie on the same line of slope $1$ through $(0,1)$ in a DeVore-Triebel diagram, the statement $u\in B^{\tilde{\alpha}_p}_{\tau,\tau}(\Omega)$ would contradict~\ref{lem:JK:neg:2}.\qedhere 
\end{enumerate}
\end{proof}

\begin{proof}[Proof of \autoref{thm:Poisson:bounds}]
Due to Jerison and Kenig~\cite[Theorem~1.2(b)]{JK95}, there exist $\Omega\subseteq\R^d$ and $f\in \cont^\infty(\overline{\Omega})$, such that the assumptions of~\autoref{lem:JK:neg} are satisfied. Therefore, the assertion follows from~\autoref{lem:JK:neg} and~\eqref{eq:JK:DahDeV:bar}. 
\end{proof}

We conclude this subsection with some further remarks.
\begin{remark}\label{rem:grisvard}
It is worth mentioning that the bounds in~\autoref{thm:Poisson:bounds} are due to worst-case scenarios regarding the behaviour of $\cont^1$ boundaries. However, for large classes of domains, which are not even necessarily of class $\cont^1$, the regularity indices $\overline{s}_p(S(\Omega))$ and~$\overline{\alpha}_p(S(\Omega))$ with~$S(\Omega)$ as defined in~\eqref{eq:Poisson:S} may be higher, at least for certain $1<p<\infty$.
For instance, if $\Omega\subseteq\R^2$ is a polygonal domain with maximal interior angle $\kappa_0\in (\pi,2\pi)$, then Grisvard~\cite{Gri1985,Gri1992} shows that 
\begin{equation}\label{eq:Poisson:bars:Pol}
	\overline{s}_p(S(\Omega))=\frac{2}{p}+\frac{\pi}{\kappa_0}, \qquad 1<p<\infty,
\end{equation}
which is strictly greater than $1+1/p$ whenever $p<\kappa_0/(\kappa_0-\pi)$. Moreover, it is known from~\cite{Dah1999} that
\begin{equation*}
	\overline{\alpha}_2(S(\Omega))=\infty.
\end{equation*}
Note that this does not contradict~\autoref{thm:main:new} since~\eqref{eq:Poisson:bars:Pol} implies that for any fixed $1<p<\infty$ and all $p_z>p$, there is no $z>\mu(p_z,p,\overline{s}_p,2)$ such that $S(\Omega)\subseteq B^s_{p_z,p_z}(\Omega)$ for all $s<z$. 
\end{remark}

\begin{remark}
In~\cite{Cos2019} Costabel constructs bounded $\cont^1$ domains $\Omega\subseteq\R^d$ of arbitrary dimension~$d\geq 2$, for which there exists $f\in\cont^\infty(\overline{\Omega})$ such that the solution $u$ to the corresponding Poisson equation~\eqref{eq:Poisson} is contained in $W^{3/2}_2(\Omega)$, but not in $W^{1+1/p+\varepsilon}_p(\Omega)$ for any $1\leq p<\infty$ and any $\varepsilon>0$; see, in particular, Theorem~1.2 and Remark~1.3 therein.
\autoref{lem:JK:neg} above shows that the counterexample provided by Jerison and Kenig in~\cite[Section~6]{JK95} as a proof of Theorem~1.2(b) therein has these properties, too.
\end{remark}

\subsection{The \texorpdfstring{$p$}{p}-Poisson problem}
Our second example 
is the $p$-Poisson problem for some fixed $1<p<\infty$.
For $d\geq 2$, let again $\Omega \subseteq\R^d$ denote a bounded Lipschitz domain.
Given $f\in W^{-1}_{p'}(\Omega)$ with $1/p+1/p'=1$, we seek the unique weak solution $u\in W_{p,0}^1(\Omega)$ to 
\begin{align}\label{eq:p-poisson}
	\left.
	\begin{alignedat}{3}
		\Delta_p u &= f \quad \!\text{on}\quad \Omega, \\
		u&=0 
		\quad \text{on}\quad \partial\Omega,\\
	\end{alignedat}
	\right\}	
\end{align}
where $\Delta_p u := \mathrm{div}(\abs{\nabla u}_2^{p-2} \, \nabla u)$ denotes the $p$-Laplace operator. 

For this problem various local and global regularity results are known; we refer, e.g., to~\cite{BalDie+2019, DahDieHar+2014,Ebm2002,HarWei2018,Sav1998} and the references therein. Our subsequent analysis relies on the following result.

\begin{prop}[{Ebmeyer~\cite[Theorem~2.4]{Ebm2002}}]\label{prop:p-poisson-reg}
	For $d\geq 2$ let $\Omega \subseteq\R^d$ denote a bounded polyhedral Lipschitz domain. Moreover, let $1<p\leq 2$ and $f\in L_{p'}(\Omega)$. Then the unique weak solution to \eqref{eq:p-poisson} satisfies
	\begin{align}\label{eq:Zexample}
		u \in W_{p_z}^{s}(\Omega)
		\qquad\text{for all}\quad s<\frac{3}{2} \quad \text{and}\quad  p_z:=p_z(d,p):=\frac{p}{1-(2-p)/(2d)}.
	\end{align}
\end{prop}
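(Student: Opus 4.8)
Since \autoref{prop:p-poisson-reg} is attributed to Ebmeyer, the plan is essentially to invoke the cited reference: the statement is Theorem~2.4 of~\cite{Ebm2002}, so the only work is to match the notation there and to rewrite the integrability exponent obtained in that paper in the closed form $p_z=p/(1-(2-p)/(2d))$. At this point I would also record the elementary observation that $p<p_z<\infty$ for $1<p<2$ and $d\ge2$ (with $p_z=p$ only when $p=2$), since $p_z>p$ is exactly what is needed when this is fed into \autoref{thm:main:new} below. Thus, strictly speaking, no proof is required here beyond the pointer to~\cite{Ebm2002}.

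For orientation, if one did want to sketch the mechanism behind Ebmeyer's theorem, the plan would be as follows. The core is the nonlinear second-order a~priori estimate for the $p$-Laplacian: writing $\mathbf V(\xi):=\abs{\xi}_2^{(p-2)/2}\,\xi$ and using the monotonicity inequality $(\abs{\xi}_2^{p-2}\xi-\abs{\eta}_2^{p-2}\eta)\cdot(\xi-\eta)\gtrsim\abs{\mathbf V(\xi)-\mathbf V(\eta)}_2^{2}$, one tests the weak form of~\eqref{eq:p-poisson} against (tangential) difference quotients of the solution to obtain $\mathbf V(\nabla u)\in W^1_2(\Omega)$ with bounds controlled by $f$. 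Since $\abs{\mathbf V(\nabla u)}_2=\abs{\nabla u}_2^{p/2}$, Sobolev's embedding then improves the integrability of $\nabla u$; plugging this into the algebraic identity $\nabla u=\abs{\mathbf V(\nabla u)}_2^{(2-p)/p}\,\mathbf V(\nabla u)$, whose outer map is locally Lipschitz precisely because $p\le2$, transfers a (fractional) amount of the $W^1_2$-smoothness of $\mathbf V(\nabla u)$ onto $\nabla u$ itself. Bookkeeping the exponents appearing in the Hölder and interpolation steps then yields $u\in W^s_{p_z}(\Omega)$ for every $s<3/2$, with exactly the $p_z$ above.

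I expect the genuine obstacle --- and the reason the theorem is confined to \emph{polyhedral} domains and stops at smoothness $3/2$ rather than $2$ --- to be the behaviour near the lower-dimensional edges and vertices of $\partial\Omega$: on the relative interior of a face one may differentiate the equation tangentially exactly as in a half-space, but near an edge the difference-quotient argument must be localized and combined with the dihedral geometry so that the constants stay uniform, and the normal direction then contributes only the limited fractional gain reflected in the exponent $3/2$. Carrying this boundary analysis out carefully is the technical heart of~\cite{Ebm2002}, and for the purposes of the present paper I would simply take \autoref{prop:p-poisson-reg} as given.
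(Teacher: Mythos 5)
Your proposal matches the paper's treatment exactly: \autoref{prop:p-poisson-reg} is stated as a quotation of Ebmeyer's Theorem~2.4 and the paper offers no proof of its own, so citing~\cite{Ebm2002} (after translating the integrability exponent into the form $p_z=p/(1-(2-p)/(2d))$ and noting $p_z>p$ for $1<p<2$) is all that is required. Your sketch of the difference-quotient argument behind Ebmeyer's result is a reasonable bonus but plays no role in the paper.
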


Although, to the best of our knowledge, even in this restricted setting the exact value of $\overline{s}_p$ is unknown, we can apply our main~\autoref{thm:main:new} in order to deduce the following statement:
\begin{theorem}\label{thm:pPoisson}
	For $d\geq 2$ let $\Omega \subseteq\R^d$ denote some bounded polyhedral Lipschitz domain. 
	Given $1<p<2$ let $S(\Omega)$ denote the set of solutions to the $p$-Poisson problem \eqref{eq:p-poisson} with right-hand sides $f\in L_{p'}(\Omega)$. 
	Then for the regularity indices $\overline{s}_p$ and $\overline{\alpha}_p$ as defined in \eqref{eq:def:sbar} and~\eqref{eq:def:alphabar}, respectively, one of the following cases applies:
	\begin{enumerate}[align=right, label=\textup{\arabic*.)}]
		\item  $3/2 \leq \overline{s}_p < 1+1/p$ and
		\begin{equation*}
		\overline{s}_p \leq \overline{\alpha}_p \leq \overline{s}_p \, \frac{1+1/p-3/2}{1+1/p-\overline{s}_p}.
		\end{equation*}
		\item $1+1/p \leq \overline{s}_p \leq \overline{\alpha}_p$.
	\end{enumerate}
\end{theorem}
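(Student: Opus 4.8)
The plan is to invoke \autoref{thm:main:new} with the additional regularity supplied by \autoref{prop:p-poisson-reg}, and then to translate its two conclusions into the two cases of the statement by a short computation of $\mu$. First I would record that for $1<p<2$ one has $p_z = p_z(d,p) > p$, since $1-(2-p)/(2d) < 1$, and that $S(\Omega)\subseteq W^1_{p,0}(\Omega)\subseteq L_p(\Omega)$ with $\overline{s}_p = \overline{s}_p(S(\Omega))\geq 1 > 0$, so the standing hypotheses of \autoref{thm:main:new} are met. By \autoref{prop:p-poisson-reg}, $S(\Omega)\subseteq W^s_{p_z}(\Omega) = F^s_{p_z,2}(\Omega)$ for all $s<3/2$; by \autoref{rem:sufficient} (equivalently, directly by the embeddings collected in \autoref{prop:embeddings}) this implies $S(\Omega)\subseteq B^s_{p_z,p_z}(\Omega)$ for all $s<3/2$. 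Hence \autoref{thm:main:new} applies with $z:=3/2$, and \eqref{eq:z:s:alpha} already gives $3/2\leq\overline{s}_p\leq\overline{\alpha}_p$.

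The one computation that matters is the value of $\mu=\mu(p_z,p,\overline{s}_p,d)$. From the explicit formula for $p_z$ one gets $1/p-1/p_z=(2-p)/(2dp)$, hence $d\,(1/p-1/p_z)=(2-p)/(2p)=1/p-1/2$, and therefore $\mu=\overline{s}_p-1/p+1/2$. Consequently $z-\mu=1+1/p-\overline{s}_p$ and $\overline{s}_p-\mu=1/p-1/2=1+1/p-3/2$, so that the extra condition $z>\mu$ of \autoref{thm:main:new} reads exactly $\overline{s}_p<1+1/p$.

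Now I would split into cases. If $\overline{s}_p\geq 1+1/p$ (in particular if $\overline{s}_p=\infty$), then together with $\overline{s}_p\leq\overline{\alpha}_p$ we are in the second case. If instead $\overline{s}_p<1+1/p$, then — using $\overline{s}_p\geq 3/2$ already established, together with $1+1/p>3/2$ for $p<2$, so that the interval $[3/2,\,1+1/p)$ is genuinely nonempty — we are in the first case; here $z>\mu$ holds, and the second assertion of \autoref{thm:main:new} yields $\overline{\alpha}_p\leq\overline{s}_p\cdot(\overline{s}_p-\mu)/(z-\mu)=\overline{s}_p\cdot(1+1/p-3/2)/(1+1/p-\overline{s}_p)$, which is the claimed bound.

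The argument is essentially mechanical once $z=3/2$ is fed into \autoref{thm:main:new}; there is no genuine obstacle. The only points warranting a word of care are the initial passage from $W^s_{p_z}$-regularity to $B^s_{p_z,p_z}$-regularity (via $W^s_{p_z}=F^s_{p_z,2}$ and the elementary embedding $F^s_{p_z,2}(\Omega)\hookrightarrow B^{s'}_{p_z,p_z}(\Omega)$ for $s'<s$) and the bookkeeping for the degenerate subcase $\overline{s}_p=\infty$, which the case distinction absorbs automatically.
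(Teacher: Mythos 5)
Your proof is correct and follows essentially the same route as the paper: it feeds \autoref{prop:p-poisson-reg} into \autoref{thm:main:new} with $z=3/2$ and the given $p_z>p$, computes $\mu=\overline{s}_p-1/p+1/2$ so that the condition $z>\mu$ becomes exactly $\overline{s}_p<1+1/p$, and reads off the two cases. The only cosmetic difference is that the paper passes from $W^s_{p_z}(\Omega)$ to $B^s_{p_z,p_z}(\Omega)$ via the identity $W^s_{p_z}(\Omega)=B^s_{p_z,p_z}(\Omega)$ for $0<s\notin\N$, whereas you route through $F^s_{p_z,2}(\Omega)$ and a lossy embedding; both are harmless here since only the supremum over $s<3/2$ matters.
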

\begin{proof}
	For $1<p<2$ the parameter $p_z$ in \eqref{eq:Zexample} is strictly larger than $p$. Using that $W^s_{p_z}(\Omega)=B^s_{p_z,p_z}(\Omega)$ for $0<s\notin \N$, we thus can apply \autoref{thm:main:new} with this $p_z$ and $z:=3/2$. This yields that in any case there holds
	\begin{equation*}
		\frac{3}{2} \leq \overline{s}_p \leq \overline{\alpha}_p.
	\end{equation*}
	Moreover, $\mu=\mu(p_z,p,\overline{s}_p,d)=\overline{s}_p-1/p+1/2$ is strictly less than $z=3/2$ if, and only if, $\overline{s}_p<1+1/p$. In this case, also Formula~\eqref{eq:alpha:bound} in \autoref{thm:main:new} applies which proves the upper bound on $\overline{\alpha}_p$ in case 1.). Hence, the proof is complete.
\end{proof}

Let us add some remarks also for this example.
\begin{remark}
There exist statements similar to \autoref{prop:p-poisson-reg} also for $p \geq 2$; see, e.g., Ebmeyer~\cite{Ebm2002} for details. However, in this case the analogue of \eqref{eq:Zexample} does not provide \emph{additional} information; cf.\ \autoref{rem:main}\ref{rem:main:fail}. That is, using \autoref{thm:main:new} not much can be said except that $\overline{\alpha}_p(S(\Omega)) \geq \overline{s}_p(S(\Omega))$ might be unbounded. Anyway, again this agrees well with results due to Dahlke~\cite{Dah1999}, 
who showed that for $p=d=2$ and smooth right-hand sides we indeed have $\overline{\alpha}_2(S(\Omega))=\infty > \overline{s}_2(S(\Omega))$; see also \autoref{rem:grisvard} above.
\end{remark}

\begin{remark}
\autoref{thm:pPoisson} shows that on polyhedral Lipschitz domains the maximal $L_p(\Omega)$-Sobolev smoothness~$\overline{s}_p$ is at least $3/2$. In~\cite[Theorem~2']{Sav1998} Savar{\'e} proved that this remains true on general Lipschitz domains under the weaker condition that $f\in W^{-1/2}_{p'}(\Omega)$. Moreover, in~\cite[Remark~4.3]{Sav1998} he even claims optimality. 
However, if we stick to the stronger assumptions that $\Omega$ is polyhedral Lipschitz and $f\in L_{p'}(\Omega)$, we may use positive Besov regularity results w.r.t.\ the scale \eqref{eq:scale} in order to conclude a better lower bound. Indeed, combining \autoref{prop:p-poisson-reg} with \autoref{rem:main}\ref{rem:main:lower} shows that
\begin{equation*}
	\overline{s}_p 
	\geq \tilde{s}_p \geq \alpha \cdot\frac{z+d\,(1/p-1/p_z)}{\alpha+d\,(1/p-1/p_z)} 
	= \left( 1+\frac{1}{p} \right) \, \frac{\alpha}{\alpha+1/p-1/2} =:\widehat{s}_p(\alpha)
	\qquad \text{for all} \quad \alpha \leq \overline{\alpha}_p.
\end{equation*}
Note that this lower bound is strictly monotonically increasing in $\alpha$, where
\begin{equation*}
	\frac{3}{2} = \widehat{s}_p(3/2) < \widehat{s}_p(\alpha) < 1+ \frac{1}{p}, \qquad \alpha > \frac{3}{2}.
\end{equation*}
Results of Dahlke et~al.~\cite[Theorem~4.20]{DahDieHar+2014} imply that on bounded polygonal domains $\Omega\subseteq\R^2$,
\begin{equation}\label{eq:DahlkepPoisson}
	S(\Omega):=\left\{u\in W^1_{p,0}(\Omega) \sep \Delta_p u \in L_\infty(\Omega) \right\} \subseteq B^{\alpha}_{\tau,\tau}(\Omega), 
	\quad \frac{1}{\tau}=\frac{\alpha}{2}+\frac{1}{p}, \quad \text{for all} \quad 0<\alpha < 2,
\end{equation}
such that in this case
\begin{equation*}
	\overline{s}_p(S(\Omega)) \geq 2\, \frac{1+1/p}{1+1/p+1/2}, \qquad 1<p \leq 2.
\end{equation*}
Furthermore, recent results indicate that we may replace $L_\infty(\Omega)$ by $L_{p'}(\Omega)$ in \eqref{eq:DahlkepPoisson}.
\end{remark}

\subsection{The inhomogeneous stationary Stokes problem}
Our third and final example is the inhomogeneous stationary Stokes system
\begin{align}\label{def:StokesProb}
	\left.
	\begin{alignedat}{3}
		-\Delta u + \nabla \pi &= f \quad \text{in} \quad \Omega, \\
		\mathrm{div}(u) &= g \quad \,\text{in} \quad \Omega, \\
		u_{|_{\partial\Omega}} &= h \quad \text{on} \quad \partial\Omega,\; \\
	\end{alignedat}
	\right\}	
\end{align}
where $\Omega\subseteq\R^d$ is again a bounded Lipschitz domain ($d\geq 2$) and
$f$, $g$, and $h$ are given functions (or distributions) on $\Omega$ and $\partial\Omega$, respectively, such that the compatibility condition
\begin{align}\label{def:Compatibility}
\int_{\partial\Omega} h(y) \cdot \eta(y) \d y = \int_{\Omega} g(x) \d x
\end{align}
is satisfied; here, $\eta$ denotes the outward unit normal vector to $\partial\Omega$.

For this problem, Mitrea and Wright \cite{MitWri2012} showed that a suitably modified regularity shift holds in a range of parameters $\mathcal{R}_{d,\epsilon}\subseteq \R\times (0,\infty]$ similar to the one established by Jerison and Kenig~\cite{JK95} for the classical Poisson problem; see \cite[page~178]{MitWri2012} for a precise definition of $\mathcal{R}_{d,\varepsilon}$. Without going into details, this range depends on a ``roughness parameter'' $\epsilon=\epsilon(\Omega)\in(0,1]$ which measures the Lipschitz nature of $\Omega$. However, for sufficiently smooth domains, e.g., when $\partial\Omega\in \cont^1$, we may take $\epsilon=1$.

\begin{prop}[{Mitrea and Wright \cite[Theorem~1.5/10.15]{MitWri2012}}]\label{prop:Mitrea}
	For $d\geq 2$ let $\Omega\subseteq\R^d$ be a bounded Lipschitz domain. 
	Moreover, let $A\in\{B,F\}$, as well as $(d-1)/d<p \leq \infty$, $0<q\leq \infty$, and $(d-1)\max\{1/p-1,0\} < s < 1$ with $(s,p)\in \mathcal{R}_{d,\epsilon(\Omega)}$, where $\max\{p,q\}<\infty$ if $A=F$. 
	Then for 
	\begin{equation*}
		f\in A^{s+1/p-2}_{p,q}(\Omega)^d, \quad g\in A^{s+1/p-1}_{p,q}(\Omega), 
		\quad \text{and}\quad 
		h\in \begin{cases}
			B^{s}_{p,q}(\partial\Omega)^d & \text{if } A=B,\\
			F^{s}_{p,p}(\partial\Omega)^d & \text{if } A=F,
		\end{cases}
	\end{equation*}
	there exists a solution $(u,\pi)\in A^{s+1/p}_{p,q}(\Omega)^d\times A^{s+1/p-1}_{p,q}(\Omega)$ to the inhomogeneous stationary Stokes system \eqref{def:StokesProb}, \eqref{def:Compatibility}.
	Moreover, it is unique modulo the addition of locally constant functions in $\Omega$ to the pressure $\pi$.
\end{prop}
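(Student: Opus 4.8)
Strictly speaking, \autoref{prop:Mitrea} is quoted from Mitrea and Wright, so in the present paper the ``proof'' consists of the reference \cite[Theorem~1.5 and Theorem~10.15]{MitWri2012}; let us nevertheless outline the route one takes to establish a statement of this type. The plan is to split the inhomogeneous system \eqref{def:StokesProb} into a part carrying the interior data $f,g$ and a part carrying the boundary datum $h$. First I would extend $f$ and $g$ to compactly supported distributions on $\R^d$ of the same smoothness and solve the Stokes system on all of $\R^d$ by means of the hydrodynamic (Oseen--Newtonian) potentials; their kernels are homogeneous of the appropriate order, so standard Fourier-multiplier and Calder\'on--Zygmund theory yields the sharp shift $A^{s+1/p-2}_{p,q}(\R^d)^d\times A^{s+1/p-1}_{p,q}(\R^d)\to A^{s+1/p}_{p,q}(\R^d)^d\times A^{s+1/p-1}_{p,q}(\R^d)$ on the whole Besov/Triebel--Lizorkin scale. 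Subtracting this particular solution reduces everything to finding a pair $(v,\vartheta)$ with $-\Delta v+\nabla\vartheta=0$, $\mathrm{div}(v)=0$ in $\Omega$ and a prescribed Dirichlet trace; the sharp trace theorem for $A^{s+1/p}_{p,q}(\Omega)$ is exactly what forces the relevant boundary space to be $B^{s}_{p,q}(\partial\Omega)$, resp.\ $F^{s}_{p,p}(\partial\Omega)$, in the statement.

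The heart of the matter is the boundary problem. I would represent $(v,\vartheta)$ through the Stokes single- and double-layer potentials on $\partial\Omega$, so that the Dirichlet condition turns into a boundary integral equation $\bigl(\tfrac12 I+K\bigr)\varphi = v|_{\partial\Omega}$ with $K$ the principal-value Stokes double-layer operator. What must be shown is (a) that $\tfrac12 I+K$ is invertible on $B^{s}_{p,q}(\partial\Omega)$, resp.\ $F^{s}_{p,p}(\partial\Omega)$, precisely when $(s,p)\in\mathcal{R}_{d,\epsilon(\Omega)}$, and (b) that the layer potentials send the resulting density into $A^{s+1/p}_{p,q}(\Omega)^d$, with the associated pressure landing in $A^{s+1/p-1}_{p,q}(\Omega)$. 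For $(s,p)$ near $(\tfrac12,2)$ this is the classical $L_2$-theory via the Rellich--Payne--Weinberger identity; the passage to the full range $\mathcal{R}_{d,\epsilon}$ is then obtained by perturbing and interpolating off that anchor point, by localizing $\partial\Omega$ into graphs of Lipschitz constant controlled by the roughness parameter $\epsilon$, and by invoking stability of the Fredholm index under such deformations --- exactly the scheme Jerison and Kenig employed for the Poisson equation in \cite{JK95}.

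Granting (a)--(b), the regularity claim is immediate: $\varphi:=\bigl(\tfrac12 I+K\bigr)^{-1}(v|_{\partial\Omega})$ lies in the boundary space, the layer potentials carry it to $(v,\vartheta)\in A^{s+1/p}_{p,q}(\Omega)^d\times A^{s+1/p-1}_{p,q}(\Omega)$, and adding back the whole-space solution yields $(u,\pi)$ with the asserted smoothness; the compatibility condition \eqref{def:Compatibility} is forced by the divergence theorem applied to $\mathrm{div}(u)=g$. For uniqueness modulo locally constant pressures one argues on the integral-equation side: a solution with vanishing interior data and vanishing trace is a single layer of a density in the kernel of $-\tfrac12 I+K^{*}$; the $L_2$-theory identifies that kernel, and since $\Omega$ may be disconnected the corresponding null solutions are precisely $u\equiv 0$ with $\pi$ constant on each connected component.

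The genuine obstacle is step (a): invertibility of the Stokes boundary layer operators on the \emph{full quasi-Banach} Besov/Triebel--Lizorkin scale, with the admissible range of $(s,p)$ degrading in a controlled way as the Lipschitz character of $\Omega$ worsens. This requires the full harmonic-analytic machinery of \cite{MitWri2012} --- Rellich identities adapted to the Stokes system, square-function and non-tangential maximal function estimates, and atomic/molecular decompositions of layer potentials on rough domains --- which is why, for the purposes of this paper, we content ourselves with citing that monograph.
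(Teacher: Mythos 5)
The paper offers no proof of \autoref{prop:Mitrea} beyond the citation of \cite[Theorems~1.5 and~10.15]{MitWri2012}, and you correctly identify this at the outset; your additional sketch of the layer-potential strategy (whole-space Newtonian/hydrodynamic potentials for the interior data, boundary integral equations and invertibility of $\tfrac12 I+K$ on the quasi-Banach Besov/Triebel--Lizorkin scale for the range $\mathcal{R}_{d,\epsilon}$) is a faithful outline of how that monograph actually proceeds. Nothing further is required here.
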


This statement can be used to conclude the subsequent regularity assertion which provides all necessary information for the application of \autoref{thm:main:new} to the Stokes problem.
\begin{lemma}\label{lem:Stokes}
	For $d\geq 2$ let $\Omega\subseteq\R^d$ denote a bounded Lipschitz domain with roughness parameter $\epsilon(\Omega)\in(0,1]$. 
	Further, let $0<s<1$, as well as $\sigma:=\min_{j\in\{1,2,3\}}\limits \sigma_j \geq 0$ and  
	\begin{align*}
		f\in H^{s-3/2+\sigma_1}(\Omega)^d, \quad g\in H^{s-1/2+\sigma_2}(\Omega), \quad \text{and} \quad h\in H^{s+\sigma_3}(\partial\Omega)^d.
	\end{align*}
	Then solutions $(u,\pi)$ to \eqref{def:StokesProb}, \eqref{def:Compatibility} exist and satisfy $(u,\pi) \in H_{p}^{s+1/p}(\Omega)^d\times H_{p}^{s+1/p-1}(\Omega)$ for all $p \in [2,\infty)$ with
	\begin{equation}\label{cond:p}
		\frac{1}{2} - \min\left\{ \frac{\epsilon(\Omega)}{2}, \frac{\sigma}{d-1}\right\} 
		\leq \frac{1}{p} 
		\leq \frac{1}{2}.
	\end{equation}
\end{lemma}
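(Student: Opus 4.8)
The plan is to deduce the lemma directly from \autoref{prop:Mitrea} (Mitrea--Wright), applied with the Triebel--Lizorkin scale $A=F$, microscopic parameter $q=2$, the prescribed smoothness $s\in(0,1)$, and each integrability exponent $p\in[2,\infty)$ permitted by \eqref{cond:p}. The reduction rests on two standard identities: $H^t_p(\Omega)=F^t_{p,2}(\Omega)$ for $1<p<\infty$ and $t\in\R$, and $F^t_{p,p}=B^t_{p,p}$. With these, the output of \autoref{prop:Mitrea} is precisely $(u,\pi)\in H^{s+1/p}_p(\Omega)^d\times H^{s+1/p-1}_p(\Omega)$, while the data it requires are $f\in H^{s+1/p-2}_p(\Omega)^d$, $g\in H^{s+1/p-1}_p(\Omega)$, and $h\in B^s_{p,p}(\partial\Omega)^d$. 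Hence it suffices to check, for every admissible $p$, that (a) $(s,p)$ is an admissible parameter pair for \autoref{prop:Mitrea}, and (b) the data $f,g,h$ of the lemma lie in those three spaces; \autoref{prop:Mitrea} then delivers the asserted existence and regularity (with the same uniqueness statement as there).

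For (a), since $p\ge 2$ we have $1/p\le 1/2<1$, so $(d-1)\max\{1/p-1,0\}=0<s<1$ and $\max\{p,q\}=p<\infty$; the only requirement that is not immediate is $(s,p)\in\mathcal{R}_{d,\epsilon(\Omega)}$. Here one uses the explicit (piecewise-linear) description of the Mitrea--Wright range: for $0<s<1$ and $1/p$ in the interval $[\,\tfrac12-\tfrac{\epsilon(\Omega)}{2},\,\tfrac12\,]$ one has $(s,p)\in\mathcal{R}_{d,\epsilon(\Omega)}$ by \cite[p.~178]{MitWri2012}, and \eqref{cond:p} implies in particular $\tfrac12-\tfrac{\epsilon(\Omega)}{2}\le 1/p\le\tfrac12$. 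For (b), write $H^t(\Omega)=F^t_{2,2}(\Omega)$ and $H^t(\partial\Omega)=B^t_{2,2}(\partial\Omega)$, and note that the left-hand inequality in \eqref{cond:p} implies $1/p\ge\tfrac12-\tfrac{\sigma}{d-1}$, i.e.\ $\sigma\ge (d-1)(\tfrac12-\tfrac1p)$, whence $\sigma_j\ge (d-1)(\tfrac12-\tfrac1p)$ for $j=1,2,3$. Then the three Sobolev-type embeddings
\begin{align*}
	F^{s-3/2+\sigma_1}_{2,2}(\Omega)\hookrightarrow F^{s+1/p-2}_{p,2}(\Omega),\quad
	F^{s-1/2+\sigma_2}_{2,2}(\Omega)\hookrightarrow F^{s+1/p-1}_{p,2}(\Omega),\quad
	B^{s+\sigma_3}_{2,2}(\partial\Omega)\hookrightarrow B^{s}_{p,p}(\partial\Omega)
\end{align*}
all hold (for the boundary one additionally using the trivial relation $q_0=2\le p=q_1$ of microscopic indices), since $p\ge2$ and in each case the ``smoothness $-$ dimension$/$integrability'' monotonicity condition rearranges to exactly $\sigma_j\ge (d-1)(\tfrac12-\tfrac1p)$ --- with the dimension equal to $d$ for the first two embeddings and to $d-1$ for the boundary embedding; all of these are covered by \autoref{prop:embeddings}. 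Feeding $f,g,h$ into \autoref{prop:Mitrea} finishes the proof.

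The three embedding estimates are routine once the dimensional bookkeeping just indicated is carried out. The step needing real care is (a): one must unwind the definition of $\mathcal{R}_{d,\epsilon(\Omega)}$ from \cite{MitWri2012} and confirm that the strip $\{(s,1/p):0<s<1,\ \tfrac12-\tfrac{\epsilon(\Omega)}{2}\le 1/p\le\tfrac12\}$ is contained in it. This is exactly the point at which the merely Lipschitz (rather than $\cont^1$) nature of $\Omega$ --- quantified by the roughness parameter $\epsilon(\Omega)$ --- enters, and it is responsible for the term $\epsilon(\Omega)/2$ appearing in \eqref{cond:p}; when $\partial\Omega\in\cont^1$ one takes $\epsilon(\Omega)=1$ and the strip is as wide as it can be.
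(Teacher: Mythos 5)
Your proposal is correct and follows essentially the same route as the paper: reduce to \autoref{prop:Mitrea} with $A=F$, $q=2$, verify $(s,p)\in\mathcal{R}_{d,\epsilon(\Omega)}$ for the strip given by \eqref{cond:p}, and check that the hypotheses on $f,g,h$ place them in the required $F^{s+1/p-2}_{p,2}(\Omega)$, $F^{s+1/p-1}_{p,2}(\Omega)$, $F^{s}_{p,p}(\partial\Omega)$ via Sobolev-type embeddings whose numerology reduces to $\sigma_j\ge(d-1)(\tfrac12-\tfrac1p)$. The only cosmetic differences are that the paper factors each embedding through an intermediate space $F^{s_1}_{p,2}(\Omega)$ and normalizes $\sigma\le(d-1)/2$ at the outset, neither of which changes the substance.
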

\begin{proof}
Due to simple embeddings we may w.l.o.g.\ assume that 
 $0 \leq \sigma \leq (d-1)/2$;  see \autoref{prop:embeddings}\ref{it:embedd:Sob:eps}. Further let $s\in\R$ and $p\in[2,\infty)$. Then, according to \autoref{def:spaces} and \autoref{prop:embeddings}, there holds
\begin{equation*}
	H^{s-3/2+\sigma_1}(\Omega) 
	\hookrightarrow F_{2,2}^{s-3/2+\sigma}(\Omega) 
	\hookrightarrow F_{p,2}^{s_1}(\Omega)
	\hookrightarrow F_{p,2}^{s+1/p-2}(\Omega)
\end{equation*}
provided that
\begin{align*}
	s_1 &:=s+\frac{1}{p}-2+\sigma + (d-1) \left( \frac{1}{p}-\frac{1}{2}\right) 
	\geq s+\frac{1}{p}-2.
\end{align*}
Note that this inequality is satisfied if $p$ is chosen such that
\begin{align}\label{cond:psigma}
	\frac{1}{2} - \frac{\sigma}{d-1} \leq \frac{1}{p}.
\end{align}
Moreover, similar calculations show that the same condition~\eqref{cond:psigma} implies the embeddings $H^{s-1/2+\sigma_2}(\Omega)\hookrightarrow F_{p,2}^{s+1/p-1}(\Omega)$ and $H^{s+\sigma_3}(\partial\Omega)\hookrightarrow F_{p,p}^{s}(\partial\Omega)$.
Hence, our assumptions 
on the data give
\begin{align*}
	f\in F^{s+1/p-2}_{p,2}(\Omega)^d, \quad g\in F^{s+1/p-1}_{p,2}(\Omega), \quad \text{and} \quad h\in F^{s}_{p,p}(\partial\Omega)^d
\end{align*}
with $0<s<1$ and each $p\in[2,\infty)$ with~\eqref{cond:psigma}.
Furthermore, it can be checked easily that $(s,p)\in \mathcal{R}_{d,\epsilon(\Omega)}$ whenever $0<s<1$ and $p\in[2,\infty)$ with 
\begin{equation*}
	\frac{1}{2} - \frac{\epsilon(\Omega)}{2} \leq \frac{1}{p}.
\end{equation*}
Thus, the claim follows from \autoref{prop:Mitrea} applied for $A:=F$, $q:=2$, as well as $0<s<1$ and $p\in[2,\infty)$ restricted by~\eqref{cond:p}, and \autoref{def:spaces}.
\end{proof}

\begin{theorem}\label{thm:stokes}
	For $d\geq 2$ let $\Omega\subseteq\R^d$ denote a bounded Lipschitz domain with roughness parameter $\epsilon=\epsilon(\Omega)\in (0,1]$. 
	Let $S_u(\Omega)$ and $S_\pi(\Omega)$ denote the sets of solutions $(u,\pi)$ to the inhomogeneous stationary Stokes problem \eqref{def:StokesProb}, \eqref{def:Compatibility} with
	\begin{align*}
		f\in H^{-1/2+\sigma_1}(\Omega)^d, \quad g\in H^{1/2+\sigma_2}(\Omega), \quad \text{and} \quad h\in H^{1+\sigma_3}(\partial\Omega)^d,
	\end{align*}
	where
	\begin{equation*}
		\sigma:=\min_{j\in\{1,2,3\}}\limits \sigma_j>0.
	\end{equation*}
	Moreover, let $m:=\min\left\{ (d-1)\, \epsilon/2,\sigma\right\}$. 
	Then for the regularity indices $\overline{s}_2:=\overline{s}_2(S_u(\Omega))$ and $\overline{\alpha}_2:=\overline{\alpha}_2(S_u(\Omega))$ of (each component of) the velocity $u$ one of the following cases applies:
\begin{enumerate}[align=right,label=\textup{\arabic*.)}] 
	\item $3/2 \leq \overline{s}_2 < 3/2+m$ and
		\begin{equation*}
			\overline{s}_2 
			\leq \overline{\alpha}_2 
			\leq \overline{s}_2 \cdot \frac{d}{d-1} \cdot\frac{m}{3/2+m-\overline{s}_2}.
		\end{equation*}
	\item $3/2+m \leq \overline{s}_2 \leq \overline{\alpha}_2$.
\end{enumerate}
For the regularity of the pressure $\pi$ an analogous statement holds with $3/2$ replaced by $1/2$.
\end{theorem}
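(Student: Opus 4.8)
The plan is to feed \autoref{thm:main:new} with the extra regularity information provided by \autoref{lem:Stokes}, applied for every $0<s<1$, and then to recognise the two alternatives of the statement as the two regimes $z>\mu$ and $z\le\mu$ of \autoref{thm:main:new}. Throughout I take the base integrability $p=2$, so that $\overline{s}_2$ and $\overline{\alpha}_2$ in \autoref{thm:stokes} are exactly the quantities produced by \autoref{thm:main:new}.

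First I would check that the data assumed in \autoref{thm:stokes} fall, for every $0<s<1$, into the spaces required by \autoref{lem:Stokes}: this holds because $-1/2+\sigma_1\ge s-3/2+\sigma_1$, $1/2+\sigma_2\ge s-1/2+\sigma_2$, $1+\sigma_3\ge s+\sigma_3$, and $\sigma=\min_j\sigma_j>0$, so that \autoref{prop:embeddings} applies. Since $m/(d-1)=\min\{\epsilon/2,\sigma/(d-1)\}$, condition~\eqref{cond:p} reads $1/2-m/(d-1)\le 1/p\le 1/2$, and \autoref{lem:Stokes} (which also guarantees existence) puts every solution in $H^{s+1/p}_p(\Omega)^d\times H^{s+1/p-1}_p(\Omega)$ for all such $p$ and all $0<s<1$. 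Letting $s\uparrow1$ and using $H^t_p(\Omega)=F^t_{p,2}(\Omega)$ (see \autoref{def:spaces}), this gives, for each admissible $p$,
\[
S_u(\Omega)\subseteq F^t_{p,2}(\Omega)\quad\text{for all } t<1+\tfrac1p,\qquad S_\pi(\Omega)\subseteq F^t_{p,2}(\Omega)\quad\text{for all } t<\tfrac1p.
\]
Taking $p=2$ yields $\overline{s}_2(S_u(\Omega))\ge 3/2$ and $\overline{s}_2(S_\pi(\Omega))\ge 1/2$ (accounting for the ``$3/2$'' and ``$1/2$'' in the theorem), while taking the largest admissible exponent $p_z$, namely $1/p_z:=1/2-m/(d-1)>0$ (so $p_z>2$, since $m>0$), and invoking \autoref{rem:sufficient} with $A=F$ and $q_z=2$ yields $S_u(\Omega)\subseteq B^t_{p_z,p_z}(\Omega)$ for all $t<z_u:=1+1/p_z$ and $S_\pi(\Omega)\subseteq B^t_{p_z,p_z}(\Omega)$ for all $t<z_\pi:=1/p_z$; taking $p_z$ as large as condition~\eqref{cond:p} permits is what will make the final bound sharpest.

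Now I would apply \autoref{thm:main:new} with $p=2$, this $p_z$, and $z:=z_u$ for the velocity. Writing $\overline{s}_2=\overline{s}_2(S_u(\Omega))$, relation \eqref{eq:z:s:alpha} gives $z_u\le\overline{s}_2\le\overline{\alpha}_2$ (the left inequality being automatic, since $z_u<3/2\le\overline{s}_2$). A short computation gives $\mu=\overline{s}_2-d(1/2-1/p_z)=\overline{s}_2-dm/(d-1)$, so $z_u>\mu$ holds precisely when $\overline{s}_2<3/2+m$; in that case $\overline{s}_2-\mu=dm/(d-1)$ and $z_u-\mu=3/2+m-\overline{s}_2$, so \eqref{eq:alpha:bound} becomes
\[
\overline{\alpha}_2\;\le\;\overline{s}_2\cdot\frac{\overline{s}_2-\mu}{z_u-\mu}\;=\;\overline{s}_2\cdot\frac{d}{d-1}\cdot\frac{m}{3/2+m-\overline{s}_2},
\]
which together with $\overline{s}_2\ge3/2$ is precisely the first alternative; if instead $\overline{s}_2\ge 3/2+m$, then \eqref{eq:z:s:alpha} by itself gives the second alternative. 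Rerunning the identical argument for $S_\pi(\Omega)$ with $z:=z_\pi$ in place of $z_u$ only replaces the base value ``$3/2$'' by ``$1/2$'' throughout, which is exactly the asserted statement for the pressure.

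I do not expect a genuine obstacle, the analytic content being already contained in \autoref{lem:Stokes} and \autoref{thm:main:new}; what needs care is the passage from the ``$s=1$'' data of \autoref{thm:stokes} to the open range $0<s<1$ of \autoref{lem:Stokes} (handled by the supremum over $s<1$), the use of \autoref{rem:sufficient} to convert the Triebel--Lizorkin membership $S(\Omega)\subseteq F^t_{p_z,2}(\Omega)$ into the Besov membership required by \autoref{thm:main:new}, and the degenerate case $m=(d-1)/2$ (possible only when $\epsilon=1$ and $\sigma\ge(d-1)/2$), in which $1/p_z=0$ is not admissible in \autoref{lem:Stokes}. In that case I would take $1/p_z=\delta$ for each small $\delta>0$, apply \autoref{thm:main:new} for every such $\delta$, and let $\delta\downarrow 0$: since the right-hand side of \eqref{eq:alpha:bound} is monotone in $1/p_z$ (a consequence of $\overline{s}_2\ge 3/2$) and converges, as $\delta\downarrow 0$, to the bound displayed above with $m=(d-1)/2$, that bound remains valid.
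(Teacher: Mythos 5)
Your proposal is correct and follows essentially the same route as the paper: feed the regularity from \autoref{lem:Stokes} (via \autoref{rem:sufficient}) into \autoref{thm:main:new} with $p=2$ and a $p_z>2$ chosen at (or near) the left endpoint of \eqref{cond:p}, and read off the two alternatives from the sign of $z-\mu$. The only cosmetic difference is that the paper runs the $\delta$-approximation $(d-1)(1/2-1/p_z)=m-\delta$ and lets $\delta\downarrow 0$ uniformly in all cases, whereas you take the endpoint $1/p_z=1/2-m/(d-1)$ directly and reserve the limiting argument for the degenerate case $m=(d-1)/2$; both are valid and your computation of $\mu$, $z-\mu$, and the resulting bound matches the paper's.
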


\begin{proof}
Let us only consider the assertions on $S_u(\Omega)$; the results for $S_\pi(\Omega)$ can be derived 
 in exactly 
the same way.
Due to \autoref{prop:embeddings}\ref{it:embedd:Sob:eps} and \autoref{lem:Stokes} applied for $p=2$ we know that 
\begin{equation*}
	S_u(\Omega) \subseteq  H^{s+1/2}(\Omega)^d = F_{2,2}^{s+1/2}(\Omega)^d 
	\qquad \text{for all}\qquad
	s<1.
\end{equation*}
Therefore, by \autoref{rem:main}\ref{rem:main:q} we have
$3/2\leq \overline{s}_2 \leq \overline{\alpha}_2$.

Since $m>0$, it remains to show that if $\overline{s}_2 < 3/2+m$, then the stated upper bound on $\overline{\alpha}_2$ holds true. 
To this end, let us define
\begin{equation*}
	\overline{\delta} := \min\left\{1,\, \frac{1}{2} \left( \frac{3}{2} + m - \overline{s}_2 \right)\right\}.
\end{equation*}
Then $3/2\leq \overline{s}_2 < 3/2+ m$ particularly implies that $0<\overline{\delta} < m \leq (d-1)/2$.
For each arbitrarily fixed $\delta \in (0,\overline{\delta})$ we can now choose $p_z=p_z(\delta) \in (2,\infty)$ with
\begin{equation*}
	(d-1)\left( \frac{1}{2}-\frac{1}{p_z} \right) = m - \delta.
\end{equation*}
Then the definition of $m$ implies that
\begin{equation*}
	0<\frac{1}{2}-\frac{1}{p_z} < \min\left\{ \frac{\epsilon(\Omega)}{2}, \frac{\sigma}{d-1}\right\}
\end{equation*}
and hence $p_z$ satisfies \eqref{cond:p}. 
Thus, \autoref{lem:Stokes} ensures that $S_u(\Omega)\subseteq H^s_{p_z}(\Omega)=F^{s}_{p_z,2}(\Omega)$ for all $s<z:=1+1/p_z$. According to \autoref{rem:sufficient}, this allows to apply \autoref{thm:main:new}, where
\begin{equation*}
	\mu = \overline{s}_2 - d\, \left( \frac{1}{2}-\frac{1}{p_z}\right) = z + \overline{s}_2 - \frac{3}{2} - m + \delta < z - \frac{1}{2} \left( \frac{3}{2}+m - \overline{s}_2\right) < z.
\end{equation*}
Therefore, the bound~\eqref{eq:alpha:bound} applies which shows that
\begin{equation*}
	\overline{\alpha}_2 \leq \overline{s}_2 \cdot \frac{\overline{s}_2-\mu}{z-\mu} 
	= \overline{s}_2 \cdot \frac{d\,(1/2-1/p_z)}{3/2+  m-\delta - \overline{s}_2} 
	= \overline{s}_2 \cdot \frac{d}{d-1} \cdot \frac{m-\delta}{3/2+  m-\delta - \overline{s}_2}.
\end{equation*}
Since the latter inequality holds for arbitrary small $\delta>0$, this completes the proof.
\end{proof}

Let us conclude also this section with some final remarks:

\begin{remark}
Assume for simplicity that $\sigma = \sigma_1$ is chosen small enough such that $m=\sigma$. Then case~2.)~in \autoref{thm:stokes} can be interpreted as a shift $H^{-1/2+\sigma} \ni f \mapsto u \in H^{3/2+\sigma}$ of full order (two) within the Sobolev scale. However, as we have seen in \autoref{sect:Poisson}, already for the classical Poisson problem this shift might fail even on $\cont^1$ domains. Although we do not know about an explicit example, it is very likely that the same is true for the Stokes problem. Then case~1.)~applies and we have a non-trivial upper bound $\overline{\alpha}_2 \leq b$ on the Besov smoothness w.r.t.\ the scale \eqref{eq:scale} with $p=2$. Moreover note that this $b=b(\overline{s}_2)$ is monotonically increasing in $\overline{s}_2$, where 
\begin{equation*}
	\frac{3}{2} \, \frac{d}{d-1} = b(3/2) \leq b(\overline{s}_2) < b(3/2+m) = \infty, \qquad \overline{s}_2\in [3/2,3/2+m).
\end{equation*}

Recently Eckhardt et al.~\cite[Theorem~3.3]{EckCioDah2016} addressed the question of Besov regularity for dimensions $d\geq 3$ under the additional conditions that the boundary of $\Omega$ is connected and $g=0$. Rewritten in our notation they were able to show that for $\sigma_1=1/2$ and $\sigma_3=0$ we have for $d\geq 4$
\begin{equation*}
	\overline{\alpha}_2(S_u(\Omega))\geq \frac{3}{2}\, \frac{d}{d-1}
	\qquad \text{and} \qquad 
	\overline{\alpha}_2(S_\pi(\Omega))\geq \frac{1}{2}\, \frac{d}{d-1}.
\end{equation*}
\end{remark}

\begin{appendix}
\section{Appendix: Basics from function space theory}\label{sect:appendix}
In this supplementary section we collect the main definitions and assertions concerning function spaces on domains which are needed throughout the paper. Here `domain' always means `non-empty, connected, open set'. Special attention is paid to bounded Lipschitz domains $\Omega\subseteq\R^d$, $d\in\N$, as defined, e.g., in Triebel~\cite[Section~1.11.4]{T06}.

\subsection{Besov and Triebel-Lizorkin spaces}
In accordance with Triebel~\cite{T83} we use the Fourier analytic approach towards Besov and Triebel-Lizorkin spaces on $\R^d$ and define the corresponding spaces on domains by restriction.

Let $d\in\N$. By $\S(\R^d)$ we denote the Schwartz space of all complex-valued rapidly decreasing $\cont^\infty$ functions on $\R^d$ and $\S'(\R^d)$ denotes its dual space of tempered distributions.
Moreover, for domains $\Omega\subseteq\R^d$ we let $\D(\Omega):=\cont_0^\infty(\Omega)$ denote the collection of all complex-valued $\cont^\infty$ functions in $\R^d$ with compact support in $\Omega$ and denote by $\D'(\Omega)$ its dual space of distributions on $\Omega$. As usual, we say two functionals $f$ and $g$ equal each other in $\S'(\R^d)$ or~$\D'(\Omega)$ if
$$
f(\varphi)=g(\varphi) \qquad \text{for all }\, \varphi \,\text{ from }\, \S(\R^d) \,\text{ or }\, \D(\Omega), \,\text{ respectively}.
$$
For $g\in \S'(\R^d)$ we denote by $g_{|_{\Omega}}$ the restriction of $g$ to $\Omega$ which means that
$$
g_{|_{\Omega}} \in \D'(\Omega) \qquad \text{and} \qquad (g_{|_{\Omega}})(\varphi):=g(\varphi) \qquad \text{for all} \qquad \varphi\in \D(\Omega).
$$
Note that this is meaningful since $\D(\Omega)\subseteq\D(\R^d)\subseteq\S(\R^d)$.

In addition, let $\F$ and $\F^{-1}$ denote the (extension of the) Fourier transform, respectively its inverse, on $\S'(\R^d)$.
Fix an arbitrary $\phi_0 \in \S(\R^d)$ such that
$$
\phi_0(x)=1 \quad \text{if} \quad \abs{x}_2\leq 1 \qquad \text{and} \qquad \phi_0(x)=0 \quad \text{if} \quad \abs{x}_2\geq \frac{3}{2}.
$$
Then the collection $\Phi:=(\phi_k)_{k\in\N_0}$, with 
$$
\phi_k(x):=\phi_0(2^{-k}x)-\phi_0(2^{-k+1}x), \qquad x\in\R^d,\qquad k\in\N,
$$
defines a smooth dyadic resolution of unity and we have
$$
f = \sum_{k=0}^\infty \F^{-1}[\phi_k\, \F f] \qquad \text{(convergence in $\S'(\R^d)$)}
$$
for all $f\in\S'(\R^d)$. Due to the celebrated Paley-Wiener-Schwartz-Theorem, the building blocks $\F^{-1}[\phi_k\, \F f]$, $k\in\N_0$, are actually entire analytic functions; see, for instance, Triebel~\cite[Section~1.2.1]{T83}.
As usual, for $0<q<\infty$, $\ell_q(\N_0)$ is the space of  $q$-summable scalar-valued sequences over $\N_0$ (bounded sequences, if $q=\infty$). 

\begin{defi}
For $d\in\N$ choose $\Phi$ as above and let $\Omega\subsetneq\R^d$ denote an arbitrary domain. Moreover, let $s\in\R$ and $0<p,q\leq \infty$.
\begin{enumerate}[align=right,label=\textup{(\roman*)}] 
\item The set $B^s_{p,q}(\R^d):=\left\{f\in \S'(\R^d) \sep \norm{f \sep B^s_{p,q}(\R^d)} <\infty \right\}$, quasi-normed by
$$
\norm{f \sep B^s_{p,q}(\R^d)} := \norm{ \left( 2^{ks} \norm{\F^{-1}[\phi_k\, \F f](\cdot) \sep L_p(\R^d)} \right)_{k\in\N_0} \sep \ell_q(\N_0)},
$$
is called Besov space.
\item If $p<\infty$, then the set $F^s_{p,q}(\R^d):=\left\{f\in \S'(\R^d) \sep \norm{f \sep F^s_{p,q}(\R^d)} <\infty \right\}$, quasi-normed by
$$
\norm{f \sep F^s_{p,q}(\R^d)} := \norm{   \norm{ \left( 2^{ks} \abs{\F^{-1}[\phi_k\, \F f](\cdot)}  \right)_{k\in\N_0} \sep \ell_q(\N_0)} \sep L_p(\R^d)},
$$
is called Triebel-Lizorkin space.
\item If $A\in\{B,F\}$ with $p<\infty$ for $A=F$, then the set
$$
A^s_{p,q}(\Omega):=\left\{ f \in \D'(\Omega) \sep \text{there exists } g\in A^{s}_{p,q}(\R^d) \text{ with } g_{|_{\Omega}} = f \text{ in } \D'(\Omega) \right\},
$$
quasi-normed by
$$
\norm{f \sep A^s_{p,q}(\Omega)}:= \inf_{\substack{g\in A^{s}_{p,q}(\R^d)\\ g_{|_{\Omega}} = f \text{ in } \D'(\Omega)}} \norm{g \sep A^{s}_{p,q}(\R^d)},
$$
is called Besov resp.\ Triebel-Lizorkin space on $\Omega$.
\end{enumerate}
\end{defi}
Standard proofs show that the spaces introduced above are  quasi-Banach spaces (Banach iff $\min\{p,q\}\geq 1$ and Hilbert iff $p=q=2$) and that different $\Phi$ provide equivalent quasi-norms, see, e.g., Triebel~\cite[Section~2.3.2]{T83}. 
Furthermore, these scales of spaces cover a variety of classical function spaces---such as, e.g.,
Lebesgue, Sobolev(-Slobodeckij), Bessel potential, Lipschitz, H\"older(-Zygmund), or Hardy spaces---as special cases. Besides our Fourier analytic definition, there is a big variety of other descriptions of these spaces which are equivalent at least for large ranges of parameters.
To give an example, we note that at least for
$$
s > \sigma_{p}:= d \, \max\left\{\frac{1}{p}-1,0\right\}
$$
the spaces~$A^s_{p,q}(\R^d)$ (and also $A^{s}_{p,q}(\Omega)$ for bounded Lipschitz domains $\Omega\subseteq\R^d$) exclusively contain regular distributions, i.e., functions, which makes it possible to characterize them as subspaces of some Lebesgue space by means of iterated differences. For details we refer to Triebel~\cite[Section~1.11.9]{T06}.

\subsection{Sobolev spaces}
We follow the usual approach and define the 
 following  
Sobolev-type spaces based on Besov and Triebel-Lizorkin spaces.

\begin{defi}\label{def:spaces}
	For $d\in \N$ let $\Omega\subseteq\R^d$ denote a bounded Lipschitz domain. 
	Then we set
	\begin{align*}
&W^m_p(\Omega):=F^m_{p,2}(\Omega), & m\in\N_0, 1<p<\infty, & \quad\text{(Sobolev)}\\
&W^s_p(\Omega):=F^s_{p,p}(\Omega)=B^s_{p,p}(\Omega), & 0<s\notin\N, 1\leq p < \infty, & \quad\text{(Sobolev-Slobodeckij)}\\
&W^s_p(\Omega):=\left[ W^{-s}_{p',0}(\Omega) \right]', & s<0, 1<p<\infty,  & \\
&H^s_p(\Omega):=F^s_{p,2}(\Omega), & s\in\R, 1<p<\infty, & \quad\text{(Bessel potential)}\\
&H^s(\Omega):=H^s_2(\Omega)=F^s_{2,2}(\Omega)=B^s_{2,2}(\Omega), & s\in\R, & \quad\text{(Sobolev-Hilbert)}
\end{align*}
where for $1<p<\infty$ the index $p'$ is given by $1/p+1/p'=1$ and $W_{p,0}^s(\Omega)$ denotes the closure of $\cont_0^\infty(\Omega)$ w.r.t.\ the norm $\norm{\cdot \sep W_p^s(\Omega)}$ if $s>0$.
\end{defi}

It is worth noting that these definitions are equivalent with the common definitions of Sobolev(-Slobodeckij) and Bessel potential 
 spaces: For 
$s=m\in\N_0$ we have
$$
W^m_p(\Omega)=\bigg\{f\in L_p(\Omega) \,\bigg|\, \norm{f \sep W^m_p(\Omega)}:=\bigg[ \sum_{\abs{\alpha}_1\leq m} \norm{D^\alpha f \sep L_p(\Omega)}^p \bigg]^{1/p}<\infty \bigg\},
$$
see Triebel~\cite[Theorem~1.122]{T06},
while $W^s_p(\Omega)=B^s_{p,p}(\Omega)$ for $0<s\notin\N$ coincides with the 
definition of Sobolev-Slobodeckij spaces as real interpolation space of $L_p(\Omega)$ with~$W^m_p(\Omega)$ for some 
 $m\in \N$ with $m>s$ 
and suitable parameters; see, e.g., DeVore~\cite[Section~4.6]{DeV1998}.

\subsection{Embeddings}
The scales of Besov and Triebel-Lizorkin spaces $A^s_{p,q}(\Omega)$ on bounded Lipschitz domains satisfy various embeddings. Let us mention a few of them:

\begin{prop}\label{prop:embeddings}
For $d\in\N$ let $\Omega\subseteq\R^d$ denote a bounded Lipschitz domain. Further assume $s,s_0,s_1\in\R$ and let $0<p,p_0,p_1,q,q_0,q_1\leq \infty$.
\begin{enumerate}[align=right,label=\textup{(\roman*)}] 
	\item Assume additionally that $p<\infty$. Then 
		$$
			B^{s}_{p,q_0}(\Omega) \hookrightarrow F^{s}_{p,q}(\Omega) \hookrightarrow B^{s}_{p,q_1}(\Omega).
		$$
		holds if, and only if, we have $q_0 \leq \min\{p,q\} \leq \max\{p,q\}\leq q_1$.
	\item\label{it:embedd:Sob} If additionally $p_0<p_1<\infty$ and $s_0-d/p_0=s_1-d/p_1$, then
		$$
			F^{s_0}_{p_0,q_0}(\Omega) \hookrightarrow F^{s_1}_{p_1,q_1}(\Omega).
		$$
\item If additionally $A\in\{B,F\}$ (and $p<\infty$ if $A=F$), as well as $q_0\leq q_1$, then
$$
A^s_{p,q_0}(\Omega) \hookrightarrow A^{s}_{p,q_1}(\Omega).
$$
\item\label{it:embedd:Sob:eps} If additionally $X,Y\in\{B,F\}$ and 
$$
s_0-s_1 > d \, \max\left\{\frac{1}{p_0} -\frac{1}{p_1}, 0 \right\},
$$
then
$$
X^{s_0}_{p_0,q_0}(\Omega) \hookrightarrow Y^{s_1}_{p_1,q_1}(\Omega)
$$
(with finite integrability parameter for $F$-spaces).
\item\label{it:embedd:sob:sharp} Assume additionally that $p_0<p<p_1$ and
\begin{equation*}
s_0-\frac{d}{p_0} = s-\frac{d}{p} =s_1-\frac{d}{p_1}.
\end{equation*}
Then
\begin{equation*}
B^{s_0}_{p_0,q_0}(\Omega)
\hookrightarrow
F^s_{p,q}(\Omega)
\hookrightarrow
B^{s_1}_{p_1,q_1}(\Omega)
\end{equation*}
holds if, and only if, we have $q_0\leq p\leq q_1$.
\end{enumerate}
\end{prop}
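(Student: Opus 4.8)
The plan is to reduce every sufficiency statement to its counterpart on $\R^d$ and then to localise known counterexamples for the two necessity statements. The reduction to $\R^d$ costs nothing here, because the spaces on $\Omega$ are defined by restriction: if $X(\R^d)\hookrightarrow Y(\R^d)$ and $g\in X(\R^d)$ satisfies $g|_\Omega=f$, then $\norm{f\sep Y(\Omega)}\leq\norm{g\sep Y(\R^d)}\leq C\norm{g\sep X(\R^d)}$, and taking the infimum over all admissible $g$ yields $\norm{f\sep Y(\Omega)}\leq C\norm{f\sep X(\Omega)}$; no extension operator is required. With this in hand, part~(iii) is immediate from $\ell_{q_0}(\N_0)\hookrightarrow\ell_{q_1}(\N_0)$ for $q_0\leq q_1$, applied inside the respective quasi-norm, and all remaining sufficiency assertions follow from the corresponding classical embeddings on $\R^d$ collected in Triebel~\cite[Section~2.7]{T83} and~\cite{T06}.

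For part~(i) I would use the two Minkowski-type nestings of the mixed quasi-norms, namely $\ell_{\min\{p,q\}}(L_p)\hookrightarrow L_p(\ell_q)$ and $L_p(\ell_q)\hookrightarrow\ell_{\max\{p,q\}}(L_p)$, which together with part~(iii) give the chain $B^s_{p,q_0}(\R^d)\hookrightarrow F^s_{p,q}(\R^d)\hookrightarrow B^s_{p,q_1}(\R^d)$ under $q_0\leq\min\{p,q\}\leq\max\{p,q\}\leq q_1$. For the Sobolev-type statement~\ref{it:embedd:Sob} the point worth stressing is that for Triebel-Lizorkin spaces sharing the differential dimension $s-d/p$ with $p_0<p_1$ the fine parameters $q_0,q_1$ are irrelevant; this rests on the Fefferman-Stein vector-valued maximal inequality and is classical, see~\cite[Section~2.7.1]{T83}.

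For~\ref{it:embedd:Sob:eps} I would first use parts~(i) and~(iii) to pass to the extreme Besov spaces, reducing the claim to $B^{s_0}_{p_0,\infty}(\Omega)\hookrightarrow B^{s_1}_{p_1,1}(\Omega)$ whenever $s_0-s_1>d\,\max\{1/p_0-1/p_1,0\}$; the strictness of the gap is exactly what lets the fine indices and the type $A\in\{B,F\}$ be chosen freely. If $p_0\leq p_1$, then $s_0-d/p_0>s_1-d/p_1$ is strict and a block-wise Nikol'skii inequality, summed using the strict gap, yields the embedding, while if $p_0>p_1$, then $s_0>s_1$ and one additionally invokes $L_{p_0}(\Omega)\hookrightarrow L_{p_1}(\Omega)$, valid precisely because $\Omega$ is bounded. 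The sufficiency half of the sharp embedding~\ref{it:embedd:sob:sharp}, i.e.\ $B^{s_0}_{p_0,q_0}\hookrightarrow F^s_{p,q}\hookrightarrow B^{s_1}_{p_1,q_1}$ under the stated equalities of differential dimension and $q_0\leq p\leq q_1$, is the Jawerth-Franke theorem, which I would quote from~\cite[Section~2.7.1]{T83} (see also the discussion in~\cite{RunSic1996}) and transfer to $\Omega$ as above rather than reprove.

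It remains to establish the necessity of $q_0\leq\min\{p,q\}\leq\max\{p,q\}\leq q_1$ in~(i) and of $q_0\leq p\leq q_1$ in~\ref{it:embedd:sob:sharp}. Here I would take the standard lacunary representatives of Besov and Triebel-Lizorkin spaces from~\cite[Lemma~2.3.1.1]{RunSic1996} that witness the failure of the corresponding embeddings on $\R^d$, chosen with support in a fixed ball $\overline{B}\subseteq\Omega$. For such compactly supported distributions the $\Omega$- and $\R^d$-quasi-norms are equivalent: one inequality is trivial, and for the reverse I would fix $\psi\in\cont_0^\infty(\Omega)$ with $\psi\equiv 1$ on $\overline{B}$ and use that $g\mapsto\psi g$ is bounded on $A^s_{p,q}(\R^d)$, so that $\norm{f\sep A(\R^d)}=\norm{\psi g\sep A(\R^d)}\leq C\norm{g\sep A(\R^d)}$ for every extension $g$, and hence $\norm{f\sep A(\R^d)}\leq C\norm{f\sep A(\Omega)}$ after passing to the infimum. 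Consequently any embedding holding on $\Omega$ would force the corresponding one on $\R^d$, and the $\R^d$-counterexamples exclude every parameter choice outside the asserted ranges. The main obstacles are the two places where non-elementary input enters: the Jawerth-Franke sufficiency in~\ref{it:embedd:sob:sharp}, which I cite rather than reprove, and the boundedness of pointwise multiplication by a smooth compactly supported function on $A^s_{p,q}(\R^d)$ throughout the full quasi-Banach range $0<p,q\leq\infty$ and for low, possibly negative, smoothness, which is the technical ingredient of the localisation deserving the most care.
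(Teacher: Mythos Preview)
Your proposal is correct in spirit and considerably more detailed than the paper's own treatment, which simply cites Triebel~\cite[p.~60]{T06} for (i), (ii), (v) and \cite[Section~2.3.2]{T83}, \cite[Theorem~4.33 and Remark~4.34]{T08} for (iii), (iv) without further argument. Your restriction-based transfer of sufficiency from $\R^d$ to $\Omega$, and your localisation of the lacunary counterexamples via a smooth cut-off multiplier for the necessity halves of (i) and (v), are exactly the right mechanisms and make explicit what the references contain.

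One small slip in your reduction for~\ref{it:embedd:Sob:eps}: reducing to $B^{s_0}_{p_0,\infty}(\Omega)\hookrightarrow B^{s_1}_{p_1,1}(\Omega)$ and then invoking (i) or (iii) to reach $Y^{s_1}_{p_1,q_1}$ only works when $\min\{p_1,q_1\}\geq 1$ (for $Y=F$) or $q_1\geq 1$ (for $Y=B$). Since the proposition allows $0<p_1,q_1\leq\infty$, you should instead target $B^{s_1}_{p_1,r}$ with $r:=\min\{p_1,q_1\}$; the strict smoothness gap still gives the Besov embedding for any fine index on the right, and then (i)/(iii) apply without restriction. This is a one-line fix and does not affect the architecture of your argument.
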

\begin{proof}
For (i), (ii), and (v) see, e.g., Triebel~\cite[page 60]{T06} and the references therein. For (iii) and (iv) additionally consult Triebel~\cite[Proposition~2 in Section~2.3.2]{T83}, as well as \cite[Theorem~4.33 and Remark~4.34]{T08}.
\end{proof}

Note that \autoref{prop:embeddings}(iv) particularly implies that for $A\in\{B,F\}$ we have
$$
A^{s_0}_{p_0,q}(\Omega) \hookrightarrow W^{s_1}_{p_1}(\Omega) 
\qquad \text{if} \qquad 
s_0>s_1\geq 0, \text{ as well as } 1<p_1 \leq p_0 \leq \infty, \text{ and } 0<q\leq\infty
$$
with $p_0<\infty$ if $A=F$, since $W^{s_1}_p(\Omega)$ can be identified with 
 $F^{s_1}_{p,2}(\Omega)$ (if $s_1\in\N$) or $F^{s_1}_{p,p}(\Omega)$ (if $0<s_1 \notin\N$).

\subsection{Complex interpolation}
For some open set $\Omega$ let $X(\Omega)$ and $Y(\Omega)$ denote quasi-normed spaces of complex-valued functions or distributions on $\Omega$. Then, under certain conditions,  the (extended) \emph{complex interpolation method} is applicable and yields further quasi-normed spaces of functions on $\Omega$. 
Besides other useful properties  these spaces,  
usually denoted by $[X(\Omega),Y(\Omega)]_\theta$, 
 $\theta\in(0,1)$, satisfy
$$
X(\Omega)\cap Y(\Omega) \hookrightarrow [X(\Omega),Y(\Omega)]_\theta 
\hookrightarrow X(\Omega)+ Y(\Omega).
$$
 Thus, in particular, any set $S(\Omega)\subset X(\Omega)\cap Y(\Omega)$ is also contained in $[X(\Omega),Y(\Omega)]_\theta$ for all $\theta\in (0,1)$. 
For details we refer to Bergh, L\"ofstr\"om~\cite{BL76} and Kalton, Mayboroda, Mitrea~\cite{KMM07}.

It turns out that the scales of Besov and Triebel-Lizorkin spaces $A^s_{p,q}(\Omega)$ on bounded Lipschitz domains behave well w.r.t.\ this method:
\begin{prop}[{Kalton et al.~\cite[Theorem~9.4]{KMM07}}]\label{prop:interpol}
For $d\in\N$ let $\Omega\subseteq\R^d$ denote a bounded Lipschitz domain and assume $\theta\in(0,1)$. Moreover, let $A\in\{B,F\}$, as well as $s,s_0,s_1\in\R$, and $0<p,p_0,p_1,q,q_0,q_1\leq\infty$ (with $p_0,p_1<\infty$ for $A=F$), and $\min\{q_0,q_1\}<\infty$. Then
$$
s=(1-\theta)\,s_0 + \theta\, s_1, \qquad 
\frac{1}{p}=\frac{1-\theta}{p_0}+\frac{\theta}{p_1}, \qquad \text{and}\qquad \frac{1}{q}=\frac{1-\theta}{q_0}+\frac{\theta}{q_1}
$$
implies
$$
\left[ A^{s_0}_{p_0,q_0}(\Omega), A^{s_1}_{p_1,q_1}(\Omega) \right]_{\theta} = A^{s}_{p,q}(\Omega)
$$
in the sense of equivalent quasi-norms.
\end{prop}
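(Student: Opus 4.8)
The plan is to remove two layers of structure so that the asserted identity reduces to an elementary exponent computation in sequence and function lattices, after which the only genuinely delicate point is the availability of a well-behaved complex interpolation method outside the Banach range. First I would pass from $\Omega$ to $\R^d$. On a bounded Lipschitz domain there is, by a universal extension operator of Rychkov type, a single linear map $\Ext$ that is bounded $A^s_{p,q}(\Omega)\to A^s_{p,q}(\R^d)$ for \emph{all} admissible $(s,p,q)$ at once and satisfies $\Res\circ\Ext=\mathrm{id}$, where $\Res$ is the restriction operator occurring in the definition of $A^s_{p,q}(\Omega)$. Thus $(\Res,\Ext)$ exhibits $A^{s_i}_{p_i,q_i}(\Omega)$ as a retract of $A^{s_i}_{p_i,q_i}(\R^d)$ for $i=0,1$ simultaneously; since the complex method used here has the interpolation property (bounded morphisms of couples induce bounded maps of interpolation spaces), this together with $\Res\circ\Ext=\mathrm{id}$ gives
\[
	[A^{s_0}_{p_0,q_0}(\Omega),A^{s_1}_{p_1,q_1}(\Omega)]_\theta
	=\Res\big([A^{s_0}_{p_0,q_0}(\R^d),A^{s_1}_{p_1,q_1}(\R^d)]_\theta\big)
\]
up to equivalent quasi-norms (the quasi-norm of $A^s_{p,q}(\Omega)$ being, by definition, the quotient one under $\Res$), so it remains to identify the bracket on $\R^d$.

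For that I would pass to lattices. The analysis map $f\mapsto(\F^{-1}[\phi_k\,\F f])_{k\in\N_0}$ from the definition of $A^s_{p,q}(\R^d)$, together with a synthesis map built from a dual smooth resolution of unity, realizes $B^s_{p,q}(\R^d)$ as a retract of the weighted sequence space $\ell^s_q(L_p(\R^d))$ (i.e.\ $\ell_q$ with weights $2^{ks}$ and values in $L_p$) and $F^s_{p,q}(\R^d)$ as a retract of $L_p(\R^d;\ell^s_q)$, the relevant projections again being bounded at both endpoints. Using the interpolation property once more, the claim collapses to the lattice identities
\[
	[\ell^{s_0}_{q_0}(L_{p_0}),\ell^{s_1}_{q_1}(L_{p_1})]_\theta=\ell^s_q(L_p),
	\qquad
	[L_{p_0}(\ell^{s_0}_{q_0}),L_{p_1}(\ell^{s_1}_{q_1})]_\theta=L_p(\ell^s_q),
\]
which follow from the description of the complex interpolation space of a couple of quasi-Banach function lattices as the Calderón product $X_0^{1-\theta}X_1^{\theta}$, together with the elementary identities $L_{p_0}^{1-\theta}L_{p_1}^{\theta}=L_p$ and $(\ell^{s_0}_{q_0})^{1-\theta}(\ell^{s_1}_{q_1})^{\theta}=\ell^s_q$ (the weights multiplying as $2^{ks_0(1-\theta)}\,2^{ks_1\theta}=2^{ks}$). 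With all six exponents $\geq 1$ this is the classical Calderón computation and the whole argument stays inside the Banach category, see e.g.\ \cite{T83}; the content of the proposition is that everything survives the quasi-Banach range $\min\{p_i,q_i\}<1$.

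That survival is the hard part. Outside the Banach range the Calderón complex method is not available and must be replaced by its extension from \cite{KMM07}, which is well-behaved precisely on \emph{analytically convex} quasi-Banach spaces --- equivalently, those admitting an equivalent plurisubharmonic quasi-norm; concretely, $\|F(0)\|\leq C\sup_{|z|=1}\|F(z)\|$ for every $X$-valued analytic polynomial $F$. So the real substance is to (i) verify that the lattices $\ell^s_q(L_p)$ and $L_p(\ell^s_q)$ --- hence also, by the two reductions, $A^s_{p,q}(\R^d)$ and $A^s_{p,q}(\Omega)$ --- are analytically convex; (ii) confirm that within this framework the complex method still has the interpolation property, still commutes with retracts, and still coincides with the Calderón product of lattices; and (iii) observe that the hypothesis $\min\{q_0,q_1\}<\infty$ is exactly what keeps the couple (and in particular its intersection) within the scope of the method, a pair with both microscopic indices equal to $\infty$ already failing the analytic convexity required. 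Granting this abstract machinery, which is the genuine achievement of \cite{KMM07}, the two reductions are routine and the exponent bookkeeping above completes the proof.
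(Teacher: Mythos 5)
The paper offers no proof of this proposition: it is quoted verbatim from Kalton, Mayboroda, and Mitrea \cite[Theorem~9.4]{KMM07}, and the appendix merely records the statement for later use. Your outline is therefore not competing with an argument in the paper but with the one in the cited reference, and it reconstructs that argument essentially correctly: reduction from $\Omega$ to $\R^d$ via a universal extension operator of Rychkov type together with the retract property of the extended complex method, reduction from $A^s_{p,q}(\R^d)$ to the lattices $\ell^s_q(L_p)$ and $L_p(\ell^s_q)$ via Littlewood--Paley analysis/synthesis, and identification of the interpolation space of a couple of analytically convex quasi-Banach lattices with the Calder\'on product. This is the route taken in \cite{KMM07}.

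One point in your sketch is inaccurate, though it does not affect the architecture. You attribute the hypothesis $\min\{q_0,q_1\}<\infty$ to a failure of analytic convexity when both microscopic indices equal $\infty$. That is not the obstruction: $A^s_{p,\infty}$ is a retract of a lattice that is $r$-convex for $r=\min\{p,1\}$, hence analytically convex. The hypothesis is needed because identifying $\left[X_0,X_1\right]_\theta$ with the full Calder\'on product $X_0^{1-\theta}X_1^{\theta}$ requires, besides analytic convexity, that at least one endpoint lattice be order continuous (separable); if $q_0=q_1=\infty$ the complex interpolation space is only the closure of $X_0\cap X_1$ in the Calder\'on product, a proper subspace, and the asserted identity fails. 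The condition $\min\{q_0,q_1\}<\infty$ guarantees exactly this separability at one endpoint. With that correction, your plan is a faithful account of the proof one finds in \cite{KMM07}.
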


\subsection*{Acknowledgements}
The authors are grateful to the anonymous reviewers  for their valuable comments and their constructive suggestions which  helped to improve the  manuscript.

\end{appendix}

\addcontentsline{toc}{chapter}{References}
\bibliographystyle{is-abbrv}

\small

\end{document}